\documentclass[10pt]{article}
\usepackage[utf8]{inputenc}
\usepackage{graphicx}
\usepackage{amsmath}
\allowdisplaybreaks[4]
\usepackage{amssymb}
\numberwithin{equation}{section}

\rmfamily

\usepackage[top = 2.5cm, bottom = 2.5cm, left = 2.5cm, right = 2.5cm]{geometry}
\usepackage[T1]{fontenc}
\usepackage[utf8]{inputenc}
\usepackage{multirow}
\usepackage{booktabs}
\usepackage{graphicx}
\graphicspath{ {images/} }
\usepackage{setspace}
\usepackage{float}
\usepackage{mathtools}
\mathtoolsset{showonlyrefs}
\usepackage{fancyhdr}

\usepackage{amsmath}
\newcommand{\T}{\mathbb{T}}
\newcommand{\R}{\mathbb{R}}
\newcommand{\Z}{\mathbb{Z}}
\newcommand{\C}{\mathbb{C}}
\newcommand{\Sph}{\mathbb{S}}
\newcommand{\Pcal}{\mathcal{P}}
\newcommand{\Scal}{\mathcal{S}}
\newcommand{\K}{\mathbb{K}}

\newcommand{\paren}[1]{\left( #1 \right)}

\newcommand{\abs}[1]{\left| #1 \right|}
\newcommand{\norm}[1]{\left\| #1 \right \|}
\usepackage[spanish,english]{babel}
\usepackage[colorlinks=true,linkcolor=blue,citecolor=red,urlcolor=blue]{hyperref}

\usepackage{amsthm}
\renewenvironment{proof}[1][\proofname]{\par
  \pushQED{\qed}%
  \normalfont\textit{#1.}~\ignorespaces
}{%
  \popQED\endtrivlist
}
\renewcommand{\qedsymbol}{$\blacksquare$}

\newtheorem{teorema}{Theorem}[section]

\newtheorem{corolario}[teorema]{Corollary}

\theoremstyle{definition}

\theoremstyle{remark}
\newtheorem{remark}[teorema]{Remark}

\addto\captionsenglish{}
\addto\captionsenglish{}
\usepackage{hyperref}
\usepackage{cleveref}

\begin{document}

\thispagestyle{empty}
\begin{titlepage}
\vspace{1cm}

\begin{center}
     {\LARGE \textbf{On Periodic Solutions of the
Zakharov–Rubenchik System}}
    
    \vspace{2cm}
    
    {\large \textbf{Yeison Alejandro Gómez Hernández}} \\
    Departamento de Matemáticas \\
    Universidad Nacional de Colombia \\
    Manizales, Colombia
    
    \vspace{1cm}
    
    \textbf{and}
    
    \vspace{1cm}
    
    {\large \textbf{Juan Carlos Cordero Ceballos}} \\
    Departamento de Matemáticas \\
    Universidad Nacional de Colombia \\
    Manizales, Colombia
\end{center}

    \vspace{2cm}

   \begin{center}
       {\Large \bf Abstract}
   \end{center}
 \vspace{1cm}
    We study the one-dimensional Zakharov-Rubenchik system
\begin{equation*}\label{ZR abstract}
    \left\{\begin{array}{l}
\partial_t\psi-\sigma_3 \partial_x\psi-i \delta \partial_x^2\psi+i\left\{\sigma_2|\psi|^2+W\left(\rho+D \partial_x\phi\right)\right\} \psi=0, \\
\partial_t\rho+\partial_x^2\phi+D\partial_x\left(|\psi|^2\right)=0, \\
\partial_t\phi+\frac{1}{M^2} \rho+|\psi|^2=0,
\end{array}\right.
\end{equation*}
with periodic initial conditions $(\psi_0, \rho_0, \phi_0)$. This system models the interaction of low-frequency waves with high-frequency waves in different physical phenomena. Using semigroup theory, we prove the global well-posedness of the associated linear equation in $H^q(\mathbb{T})\times H^s(\mathbb{T})\times H^{s+1}(\mathbb{T})$, $q,s \in \mathbb{R}$. Subsequently, we consider the modified Zakharov-Rubenchik system \eqref{ZR modificado} and, by means of a change of variables and the Banach Fixed Point Theorem, we obtain local well-posedness in the space $H^s(\mathbb{T})\times H^s(\mathbb{T})\times H^{s+1}(\mathbb{T})$, $s>3/2$.
\vspace{1cm}

\textbf{Keywords:} Zakharov-Rubenchik system, well-posedness, semigroup theory, Sobolev spaces, periodic domain, Banach Fixed Point Theorem.
\end{titlepage}

\thispagestyle{empty}
\begin{titlepage}
   \begin{center}
       {\Large \bf Introduction}
   \end{center}
    \vspace{1cm}
    This paper deals with the well-posedness of the Cauchy problem associated with the Zakharov-Rubenchik system (known as the Benney-Roskes system in the context of water waves), with periodic initial conditions. In \cite{zakharov1972nonlinear} this system was derived by V. E. Zakharov and A. M. Rubenchick to describe the interaction of any type of high-frequency waves with acoustic-type waves.
    \\

    In the notation of \cite{saut2005well} and \cite{luong2018cauchy}, the Zakharov-Rubenchik/Benney-Roskes (ZR/BR) system has the following form
    \begin{equation}\label{ZR introducción 2 o 3 dimensiones}
    \left\{\begin{array}{l}
\partial_t\psi-\sigma_3 \partial_x\psi-i \delta \partial_x^2\psi-i\sigma_1\Delta_{\perp}\psi+i\left\{\sigma_2|\psi|^2+W\left(\rho+D \partial_x\phi\right)\right\} \psi=0, \\
\partial_t\rho+\Delta\phi+D\partial_x\left(|\psi|^2\right)=0, \\
\partial_t\phi+\frac{1}{M^2} \rho+|\psi|^2=0,
\end{array}\right.
\end{equation}

    where \(\psi : \mathbb{R} \times \mathbb{R}^d \to \mathbb{C}\) is the wave function, with fast oscillation, and \( \rho, \phi : \mathbb{R} \times \mathbb{R}^d \to \mathbb{R}\) describe acoustic-type waves ($d=2,3$). The functions $\psi, \rho$ and $\phi$ depend on the time variable $t\in \mathbb{R}$ and $\vec{x}\in \mathbb{R}^d$ ($\vec{x}=(x,z)$ and $\Delta_{\perp}=\partial_x^2$ if $d=2$; $\vec{x}=(x,y,z)$ and $\Delta_{\perp}=\partial_x^2+\partial_y^2$ if $d=3$. Clearly $\Delta=\Delta_{\perp}+\partial_z^2$). 
\\

Here \(\sigma_1, \sigma_2, \sigma_3 = \pm 1\), $W>0$ measures the intensity of the coupling with the acoustic-type waves, \(M=|v_g|/c_s \) is known as the Mach number and , ($v_g$ is the group velocity of the carrier wave and $c_s$ is the speed of sound), \(D \in \mathbb{R}\) is associated with the Doppler shift due to the velocity of the medium, and \(\delta \in \mathbb{R}\) is a dimensionless dispersion coefficient.
\\

The local well-posedness of \eqref{ZR introducción 2 o 3 dimensiones} in \( H^s(\mathbb{R}^d) \times H^{s-1/2}(\mathbb{R}^d) \times H^{s+1/2}(\mathbb{R}^d) \) with \( s > \frac{d}{2}, d = 2, 3 \), was obtained in \cite{saut2005well}, using the local smoothing property of the free Schrödinger operator after reducing the system to a (nonlocal) quasilinear Schrödinger equation. When dealing with the linear Schrödinger equation, with local operator $\Delta$, it is known that the solution satisfies the estimate $t^{d/2}\Vert \psi(\cdot,t) \Vert_{L^\infty_x}\lesssim \Vert \psi(\cdot,0)\Vert_{L^1_x} $, that is, for a well-localized (nonzero) initial data the solution disperses as $t\to\infty$ at rate $t^{-d/2}$. Dispersion occurs because plane waves (fronts sharing the same phase are parallel planes) with different wave numbers have different phase velocities. In \cite{LinaresPonce2015} one can find more precise mathematical details about the dispersive effects associated with the solution of the linear Schrödinger equation. Since the work in \cite{saut2005well} uses dispersive properties of the free Schrödinger group that are valid only on the whole space, the proof does not extend to the Cauchy problem posed on \(\mathbb{T}^d\) or \(\mathbb{R}^{d-1} \times \mathbb{T}\). 
\\

On the other hand, when this method is applied to the Benney-Roskes system, it provides an existence time of order \( O(1) \); however, an existence time of order \( O(\epsilon^{-1}) \) is needed to fully justify the Benney-Roskes system as a water wave model on the correct time scales (see \cite{Lannes2006} for further details).
\\

The local well-posedness of the Zakharov-Rubenchik/Benney-Roskes system was also obtained in Obrecht's thesis \cite{obrecht2015approximation}, in the space $H^{s+1}(\mathbb{R}^d) \times H^{s}(\mathbb{R}^d) \times H^{s+1}(\mathbb{R}^d)$ \( s > d/2+1 \), with the additional condition \( \delta\sigma_1 > 0 \) (that is, when the second-order operator in the first equation of \eqref{ZR introducción 2 o 3 dimensiones} is elliptic). This was done using an energy method, following the ideas of Schochet-Weinstein in \cite{SchochetWeinstein1986} for the nonlinear Schrödinger limit of the Zakharov system. The method used in \cite{SchochetWeinstein1986} and \cite{obrecht2015approximation} consists of rewriting the Zakharov system (or the Zakharov-Rubenchik system) as a dispersive (anti-self-adjoint) perturbation of a symmetric nonlinear hyperbolic system, and then exploiting the algebraic structure of the system. Later, Luong and Saut \cite{luong2018cauchy} use the same method to deduce existence and uniqueness in two dimensions, including the aforementioned small parameter $\epsilon$ and a divergence condition that prepares the data. This result can be read as follows:\\

   The system
\begin{equation}\label{ZR introducción 2 dimensiones con epsilon}
    \left\{\begin{array}{l}
\partial_t\psi-\sigma_3 \partial_x\psi-i \epsilon\delta \partial_x^2\psi-i\epsilon\sigma_1\partial_y^2\psi+i\epsilon\left\{\sigma_2|\psi|^2+W\left(\rho+D \partial_x\phi\right)\right\} \psi=0, \\
\partial_t\rho+\Delta\phi+D\partial_x\left(|\psi|^2\right)=0, \\
\partial_t\phi+\frac{1}{M^2} \rho+|\psi|^2=0,
\end{array}\right.
\end{equation} 
with $\delta\sigma_1>0$ and initial data
\[
(\psi_0,\rho_0,\phi_0)\in H^{s+1}(\mathbb{R}^2)\times H^{s}(\mathbb{R}^2)\times H^{s+1}(\mathbb{R}^2)
\]
satisfying
\begin{equation}
WM\!\left(-\Delta\phi_0-\frac{D}{M^2}\partial_x\rho_0\right)
= \nabla\cdot V_0,
\label{eq:compatibilidad}
\end{equation}
for some $V_0\in H^{s}(\mathbb{R}^2)^2$ and $s>2$, has a unique solution 
\[
(\psi,\rho,\phi)\in
L^\infty\!\bigl(0,T;H^{s+1}(\mathbb{R}^2)\bigr)
\times
L^\infty\!\bigl(0,T;H^{s}(\mathbb{R}^2)\bigr)
\times
L^\infty\!\bigl(0,T;H^{s+1}(\mathbb{R}^2)\bigr)
\]
for some $T>0$ independent of $\epsilon$.\\

Their method leads them to state that, with minor changes, the same result holds in the three-dimensional case, that is, by replacing $\partial_y^2\psi$ with $\nabla_\perp \psi$, and also in the periodic case $\mathbb{T}^d$ ($d=2,3$) or in the semi-periodic case $\mathbb{R}^{d-1}\times\mathbb{T}$. However, this method also does not provide the expected time scale $O(\epsilon^{-1})$ when applied to the Benney-Roskes system (see \cite{obrecht2015approximation}, chapter 3). 
\\

The ``dispersive method'' used in \cite{saut2005well} works in \(\mathbb{R}^d\) but does not require the Schrödinger part of the system to be ``elliptic'' (that is, it does not require the condition \( \delta\sigma_1 > 0 \)). Moreover, it reduces the regularity of the initial data and could also be applied to (possibly unphysical) nonlinear perturbations of the system. On the other hand, Schochet-Weinstein-type methods allow one to treat the periodic or semiperiodic cases, but are relatively rigid (they depend on the algebraic structure of the system) and require initial data in a space of higher regularity  $H^{s+1}(\mathbb{R}^d)\times H^{s}(\mathbb{R}^d)\times H^{s+1}(\mathbb{R}^d), s > \frac{d}{2} + 1$ .
\\

It is worth noting that the system \eqref{ZR introducción 2 o 3 dimensiones} possesses two conserved quantities, one is the $L^2$ norm of $\psi$, usually called the mass
$$
\mathcal{M}(t)=\int |\psi(x,y_\perp,t)|^2 ,
$$

where $y_\perp = y$ or $(y,z)$, and the other is the energy of the system

$$\mathcal{E}(t) = \int_{\mathbb{R}^d}
\left(
\frac{\delta}{2}|\psi_x|^2
+ \frac{\sigma_1}{2}|\nabla_\perp \psi|^2
+ \frac{\sigma_2}{4}|\psi|^4
+ \frac{W}{4M^2}\rho^2
+ \frac{W}{4}|\nabla \phi|^2
+ \frac{W}{2}(\rho + D\phi_x)|\psi|^2
\right),
$$
once the change of variable $(x,t) \to (x+\sigma_3 t,t)$ is applied. These quantities can reveal and confirm a Hamiltonian structure in the previous version of the model.\\ 

Regarding the one-dimensional case, Oliveira \cite{oliveira2003stability} proved local well-posedness (and later global well-posedness using the aforementioned conserved quantities) in \( H^2(\mathbb{R}) \times H^1(\mathbb{R}) \times H^1(\mathbb{R}) \). This result was improved in \cite{linares2009well} where global well-posedness was established in the energy space \( H^1(\mathbb{R}) \times L^2(\mathbb{R}) \times L^2(\mathbb{R}) \).
\\

Obrecht \cite{obrecht2015approximation} also considers the full-dispersion Benney-Roskes system (a version of the Benney-Roskes system that takes into account the dispersion relation of the surface waves, which was not previously accounted for). She modifies this system by adding terms of order $O(\epsilon)$ in the second and third equations of the system. According to the rigorous derivation of the equations satisfied by the functions (see \cite{Lannes2013}, \cite{obrecht2015approximation}), this modification can be made in the one-dimensional case without changing the order of consistency of the system. Finally, this modification allows one to deduce well-posedness on the correct time scale $O(\epsilon^{-1})$.
\\

Regarding the dynamics in certain parameters and time of solutions of the ZR/BR system, some results can be found in \cite{CorderoCeballos2016}, \cite{Luong2025} and \cite{MartinezPalacios2021}. 
\\

In the present paper we use semigroup theory to prove the global well-posedness of the linear equation associated with the ZR/BR system in the space $H^q(\mathbb{T})\times H^s(\mathbb{T})\times H^{s+1}(\mathbb{T})$, $q,s\in \mathbb{R}$. Since we cannot use the smoothing property of the Schrödinger group (we are working in the periodic case), the proof does not extend to the full system. Inspired by \cite{obrecht2015approximation} and \cite{luong2018cauchy}, we consider the system \eqref{ZR modificado} with initial conditions $(\psi_0, \rho_0,\phi_0)$ in $H^s(\mathbb{T})\times H^s(\mathbb{T})\times H^{s+1}(\mathbb{T})$, $s>3/2$, and prove local well-posedness using the Banach Fixed Point Theorem. The argument achieves an existence time of order $O(\epsilon^{-1})$. 
\\

On the following page you will find the notation that we will use throughout these notes, all of which is commonly used in the field of analysis.  

\end{titlepage}

\thispagestyle{empty}
\begin{titlepage}
   \begin{center}
       {\Large \bf Basic Notation}
   \end{center}
    \vspace{1cm}
    \begin{itemize}
    \item $\T = \R/2\pi\Z \cong \Sph^{1} = \{z \in \C : |z| = 1\}$.
    
    \item $\partial_xf$ denotes the partial derivative of the function $f$ with respect to the variable $x$.
    \item $\Pcal = C^{\infty}(\T)$ is the space of infinitely differentiable functions on $\T$ with complex values. 
    
    \item $\Pcal^{\prime}$ denotes the space of periodic distributions.
    \item $\Scal(\R)$ is the Schwartz space of smooth rapidly decaying functions and $\Scal^{\prime}(\R)$ is the space of tempered distributions.
    \item $\widehat{\cdot}$ or $\mathcal{F}$  denote the Fourier transform.
    \item $(\cdot)^{\vee}$ or $\mathcal{F}^{-1}$  denote the inverse Fourier transform.
    \item $\langle \cdot, \cdot \rangle_{X}$ denotes the inner product in the Hilbert space $X$.
    \item $\langle \cdot, \cdot \rangle$ denotes the scalar product for the duality of $X'$, $X$, that is, $\langle f, x \rangle = f(x)$, for each $f \in X'$ and $x \in X$.
    \item For $X$ and $Y$ Banach spaces, $X \hookrightarrow Y$ means that the space $X$ is continuously and densely embedded in $Y$.
    \item $l^{p}(\Z) = \{\alpha = (\alpha(k))_{k \in \Z} \subset \C \mid \sum_{k \in \Z} |\alpha(k)|^{p} < \infty\}$. The norm of this space is denoted by $\|\alpha\|_{l^{p}(\Z)} = \|\alpha(k)\|_{l^{p}(\Z)} = \sum_{k \in \Z} |\alpha(k)|^{p}$.
    \item $l^{\infty}(\Z) = \{\alpha = (\alpha(k))_{k \in \Z} \subset \C \mid \sup_{k \in \Z} \{|\alpha(k)|\} < \infty\}$. The norm of this space is denoted by $\|\alpha\|_{l^{\infty}(\Z)} = \|\alpha(k)\|_{l^{\infty}(\Z)} = \sup_{k \in \Z} \{|\alpha(k)|\}$.
    \item Given the measurable space $(X, \mathbb{X}, \mu)$, we denote by $L^{p}(X)$ the space of all measurable functions $f: X \rightarrow \C$, such that $\int_{X} |f(x)|^{p} d\mu(x) < \infty$ if $1 \leq p < \infty$ or $\text{ess sup}_{X} |f| < \infty$, if $p = \infty$. We will understand $\|\cdot\|_{0} = \|\cdot\|_{L^{2}(X)}$.
    \item $H^{s}(\K)$ is the Sobolev space over $\K = \T$ or $\K = \R$ with index $s \in \R$, of $L^{2}$ type.
    \item $(f, g)_{s}$ denotes the inner product of $f, g \in H^{s}(\K)$, for all $s \in \R$, with $\K = \T$ or $\K = \R$.
   
    \item $\|f\|_{X}$ denotes the usual norm of $f$ in the normed space $X$.
     \item $\|f\|_s$ denotes the norm of $f \in H^s(\K)$, for all $s \in \R$, where $\K = \T$ or $\K = \R$.
    \item $\mathcal{B}(X,Y)$ denotes the space of bounded linear operators from $X$ to $Y$, where $X$ and $Y$ are Banach spaces. If $X = Y$, we denote $\mathcal{B}(X) = \mathcal{B}(X,X)$.
    \item $I_X$ denotes the identity operator on the space $X$.
\end{itemize}
\end{titlepage}

\tableofcontents
\section{The Periodic Zakharov-Rubenchik System}\label{section ZR periodico}

The one-dimensional Zakharov-Rubenchik system has the following form
\begin{equation}\label{ZR}
    \left\{\begin{array}{l}
\partial_t\psi-\sigma_3 \partial_x\psi-i \delta \partial_x^2\psi+i\left\{\sigma_2|\psi|^2+W\left(\rho+D \partial_x\phi\right)\right\} \psi=0, \\
\partial_t\rho+\partial_x^2\phi+D\partial_x\left(|\psi|^2\right)=0, \\
\partial_t\phi+\frac{1}{M^2} \rho+|\psi|^2=0,
\end{array}\right.
\end{equation}
with initial conditions $(\psi_0, \rho_0, \phi_0)$.

\subsection{Global Well-Posedness of the Linear Equation}
The linear system associated with \eqref{ZR} is
\begin{equation}\label{sun-jin}
    \left\{\begin{array}{l}
\partial_t\Psi=A\Psi, \\
\Psi(t=0)=\Psi_0,
\end{array}\right.
\end{equation} 
where
\begin{equation}
    \Psi=\left(\begin{array}{c}
\psi \\
\rho \\
\phi
\end{array}\right),  \quad A=\left(\begin{array}{ccc}
\sigma_3\partial_x+i \delta\partial_x^2 & 0 & 0 \\
0 & 0 & -\partial_x^2 \\
0 & -\frac{1}{M^2}  & 0
\end{array}\right),\quad \text { and }\quad \Psi_0=\left(\begin{array}{c}
\psi_0 \\
\rho_0 \\
\phi_0
\end{array}\right).
\end{equation}
For this system we have the following result, whose proof will be given after some preliminary estimates.

\begin{teorema}\label{Teorema 3.1}
    Let $q,s\in \mathbb{R}$. The system \eqref{sun-jin} is well-posed in $H^q(\mathbb{T})\times H^s(\mathbb{T})\times H^{s+1}(\mathbb{T})$. That is, for every $\Psi_0\in H^q(\mathbb{T})\times H^s(\mathbb{T})\times H^{s+1}(\mathbb{T})$, \eqref{sun-jin} has a unique solution 
    $$\Psi\in C([0,\infty);H^q(\mathbb{T})\times H^s(\mathbb{T})\times H^{s+1}(\mathbb{T}))\cap C^1([0,\infty);H^{q-2}(\mathbb{T})\times H^{s-1}(\mathbb{T})\times H^{s}(\mathbb{T})),$$
    which depends continuously on the initial data.
\end{teorema}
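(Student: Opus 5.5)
The plan is to exploit the fact that the system decouples and diagonalizes on Fourier modes. I will treat the Schrödinger component and the acoustic $(\rho,\phi)$ component separately, and for each produce an explicit propagator given by a Fourier multiplier. Writing $\widehat{\psi}(k,t)$ for the $k$-th Fourier coefficient, the first equation becomes $\partial_t\widehat{\psi}(k)=(i\sigma_3 k - i\delta k^2)\widehat{\psi}(k)$. Its multiplier $e^{it(\sigma_3k-\delta k^2)}$ has modulus one, so it defines a unitary group $U(t)$ on every $H^q(\mathbb{T})$. By dominated convergence in $\ell^2$ with the weights $(1+k^2)^{q}$, this group is strongly continuous. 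Its generator is $\sigma_3\partial_x+i\delta\partial_x^2$ with domain $H^{q+2}(\mathbb{T})$. For the $C^1$ statement, I will note that $\partial_t U(t)\psi_0=(\sigma_3\partial_x+i\delta\partial_x^2)U(t)\psi_0$ holds in $H^{q-2}$. This follows from the bound $|e^{ith}-1|/|t|\le |h|\lesssim (1+k^2)$ with $h=\sigma_3k-\delta k^2$, together with dominated convergence.

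For the acoustic part, the Fourier coefficients satisfy the $2\times2$ system
\[
\partial_t\begin{pmatrix}\widehat\rho\\ \widehat\phi\end{pmatrix}=\begin{pmatrix}0&k^2\\ -\frac{1}{M^2}&0\end{pmatrix}\begin{pmatrix}\widehat\rho\\ \widehat\phi\end{pmatrix}.
\]
Its eigenvalues are $\pm i|k|/M$. For $k\neq0$ I will write the explicit solution
\[
\widehat\rho(t)=\cos\!\big(\tfrac{|k|t}{M}\big)\widehat\rho_0+M|k|\sin\!\big(\tfrac{|k|t}{M}\big)\widehat\phi_0,\qquad
\widehat\phi(t)=-\frac{1}{M|k|}\sin\!\big(\tfrac{|k|t}{M}\big)\widehat\rho_0+\cos\!\big(\tfrac{|k|t}{M}\big)\widehat\phi_0 .
\]
For $k=0$ the solution is $\widehat\rho(t)=\widehat\rho_0$ and $\widehat\phi(t)=\widehat\phi_0-t\widehat\rho_0/M^2$.

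The key estimate, and the only real obstacle, is uniformity in $k$ of the bound in the weighted norm $\|\rho\|_s^2+\|\phi\|_{s+1}^2$. The factor $|k|$ in front of $\widehat\phi_0$ is absorbed by the extra weight on $\phi$, since $(1+k^2)^{s}k^2\le(1+k^2)^{s+1}$. The factor $1/|k|$ in front of $\widehat\rho_0$ is harmless because $(1+k^2)^{s+1}/k^2\le 2(1+k^2)^{s}$ for $|k|\ge1$. This yields $\|S(t)(\rho_0,\phi_0)\|\le C(1+t)\|(\rho_0,\phi_0)\|$, where the linear growth comes only from the zero mode. Alternatively, I could use the conserved energy $\frac{1}{M^2}|\widehat\rho|^2+k^2|\widehat\phi|^2$ for $k\ne0$, which gives an equivalent norm on nonzero modes. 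In either case $S(t)$ is a $C_0$-group, possibly non-contractive but exponentially bounded, as Hille–Yosida allows.

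Strong continuity again follows from dominated convergence. Differentiability in $H^{s-1}\times H^s$ follows from the mean value theorem, because each entry of the derivative matrix costs at most one power of $|k|$ relative to the weights: $|\partial_t\widehat\rho|\lesssim|k|(|\widehat\rho_0|+|k||\widehat\phi_0|)$, and similarly for $\widehat\phi$.

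Finally, I will set $T(t)=\mathrm{diag}(U(t),S(t))$. Then $\Psi(t)=T(t)\Psi_0$ lies in the stated $C\cap C^1$ class and solves \eqref{sun-jin}, and continuous dependence comes from the bound $\|T(t)\|\le C(1+t)$. For uniqueness, suppose $\Psi$ is a solution with $\Psi_0=0$ in that class. Then each Fourier coefficient satisfies a linear ODE in $\mathbb{C}^3$ with zero data, so it vanishes identically, and hence $\Psi\equiv0$. Equivalently, one can invoke the standard semigroup uniqueness for $\partial_t\Psi=A\Psi$ with $A$ the generator.
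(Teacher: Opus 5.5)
Your proposal is correct and is essentially the paper's argument. Both diagonalize on Fourier modes, write $e^{\mathcal{A}(k)t}$ explicitly, bound the multipliers uniformly in $k$ (the paper's $K_{M,t}$, whose $t/M^2$ term comes from the $k=0$ mode, is your $C(1+t)$), get strong continuity and the derivative in $\tilde{X}$ from dominated convergence and the mean value theorem, and get uniqueness from the Fourier-coefficient ODEs. Your explicit $C(1+t)$ bound also shows that continuous dependence should be stated uniformly on bounded intervals $[0,T]$ rather than over $[0,\infty)$ as the paper writes, because the $k=0$ mode $\widehat{\phi_0}(0)-t\,\widehat{\rho_0}(0)/M^2$ grows linearly.
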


If we apply the Fourier transform to \eqref{sun-jin} we obtain the following Ordinary Differential Equation (ODE) with respect to $t$:
\begin{equation}\label{EDO}
    \left\{\begin{array}{l}
\partial_t\widehat{\Psi(t)}(k)=\mathcal{A}(k)\widehat{\Psi(t)}(k), \quad k\in \mathbb{Z}, t\geq 0 ,\\
\widehat{\Psi(0)}(k)=\widehat{\Psi_0}(k), 
\end{array}\right.
\end{equation} 
where
\begin{equation}
    \mathcal{A}(k)=\left(\begin{array}{ccc}
\sigma_3ik-i \delta k^2 & 0 & 0 \\
0 & 0 & k^2 \\
0 & -\frac{1}{M^2}  & 0
\end{array}\right),
\end{equation}
for every $k \in \mathbb{Z}$. 
\\

Given $q,s,r\in \mathbb{R}$, set $$W:=H^q(\mathbb{T})\times H^s(\mathbb{T})\times H^r(\mathbb{T}), \quad\tilde{W}:=H^{q-2}(\mathbb{T})\times H^{r-2}(\mathbb{T})\times H^s(\mathbb{T}).$$ For $\varphi=(\varphi_1,\varphi_2,\varphi_3) \in \paren{\mathcal{P}'}^3$, we have that 
\begin{equation*}
\begin{split}
     \norm{A\varphi}_{\tilde{W}} & = \left(2\pi\sum_{k=-\infty}^{\infty}(1+k^2)^{q-2} \left| \sigma_3 ik - i \delta k^2 \right|^2 \left| \widehat{\varphi_1}(k) \right|^2 \right)^{1/2} \\
        & \quad + \left(2\pi\sum_{k=-\infty}^{\infty}(1+k^2)^{r-2} \left| k^2 \right|^2 \left| \widehat{\varphi_3}(k) \right|^2 \right)^{1/2} \\
        & \quad + \left(2\pi\sum_{k=-\infty}^{\infty}(1+k^2)^{s} \left| -\frac{1}{M^2} \right|^2 \left| \widehat{\varphi_2}(k) \right|^2 \right)^{1/2}
\end{split}
\end{equation*}

\begin{equation}\label{A}    
\begin{split}
        \quad\quad\quad\quad& \leq \left(2\pi\sum_{k=-\infty}^{\infty}(1+k^2)^{q-2} \left| k \right|^2 \left| \widehat{\varphi_1}(k) \right|^2 \right)^{1/2} \\
        & \quad + \left(2\pi\sum_{k=-\infty}^{\infty}(1+k^2)^{q-2} \left| \delta \right|^2 \left| k \right|^4 \left| \widehat{\varphi_1}(k) \right|^2 \right)^{1/2} \\
        & \quad + \left(2\pi\sum_{k=-\infty}^{\infty}(1+k^2)^{r} \left| \widehat{\varphi_3}(k) \right|^2 \right)^{1/2} \\
        & \quad + \left(\frac{2\pi}{M^4} \sum_{k=-\infty}^{\infty}(1+k^2)^{s} \left| \widehat{\varphi_2}(k) \right|^2 \right)^{1/2} \\
        & \leq \left(2\pi\sum_{k=-\infty}^{\infty}(1+k^2)^{q-1} \left| \widehat{\varphi_1}(k) \right|^2 \right)^{1/2} \\
        & \quad + \left( |\delta|^2 2\pi\sum_{k=-\infty}^{\infty}(1+k^2)^{q} \left| \widehat{\varphi_1}(k) \right|^2 \right)^{1/2} \\
        & \quad + \left(2\pi\sum_{k=-\infty}^{\infty}(1+k^2)^{r} \left| \widehat{\varphi_3}(k) \right|^2 \right)^{1/2} \\
        & \quad + \left(\frac{2\pi}{M^4}  \sum_{k=-\infty}^{\infty}(1+k^2)^{s} \left| \widehat{\varphi_2}(k) \right|^2 \right)^{1/2} \\
        & = \norm{\varphi_1}_{q-1} + |\delta|\norm{\varphi_1}_{q} + \frac{1}{M^2} \norm{\varphi_2}_{s} + \norm{\varphi_3}_{r} \\
        &\leq \paren{1+|\delta|}\norm{\varphi_1}_{q} + \frac{1}{M^2} \norm{\varphi_2}_{s} + \norm{\varphi_3}_{r}\leq C_{\delta,M} \norm{\varphi}_{W},
    \end{split}
\end{equation}

where $C_{\delta,M}=\max \{1+|\delta|, \frac{1}{M^2}\}$. Since $A$ is a linear operator, \eqref{A} implies that 
\begin{equation}
    A \in \mathcal{B}\left(W, \tilde{W}\right).
\end{equation}
To simplify the notation, throughout this subsection let us consider $q,s \in \mathbb{R}$ arbitrary but fixed, and define
$$X:=H^q(\mathbb{T})\times H^s(\mathbb{T})\times H^{s+1}(\mathbb{T}), \quad  \tilde{X}:=H^{q-2}(\mathbb{T})\times H^{s-1}(\mathbb{T})\times H^s(\mathbb{T}).$$
In particular,
\begin{corolario}
     \begin{equation}
    A \in \mathcal{B}\left(X, \tilde{X}\right).
\end{equation}
\end{corolario}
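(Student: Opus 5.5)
This corollary follows by specializing the general estimate \eqref{A} to the choice $r=s+1$. With $r=s+1$ the space $W=H^q(\mathbb{T})\times H^s(\mathbb{T})\times H^r(\mathbb{T})$ becomes exactly $X$. The target space $\tilde W=H^{q-2}(\mathbb{T})\times H^{r-2}(\mathbb{T})\times H^s(\mathbb{T})$ becomes $H^{q-2}(\mathbb{T})\times H^{s-1}(\mathbb{T})\times H^s(\mathbb{T})$, which is exactly $\tilde X$. So the plan is to check that \eqref{A} was derived for arbitrary $q,s,r\in\mathbb{R}$, and then read off the conclusion.

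Concretely, the steps are as follows.

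\emph{Linearity.} $A$ acts componentwise through Fourier multipliers, so it is linear on $(\mathcal{P}')^3$.

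\emph{Boundedness.} For $\varphi\in X$, \eqref{A} with $r=s+1$ gives
$$\|A\varphi\|_{\tilde X}\le (1+|\delta|)\|\varphi_1\|_q+\frac{1}{M^2}\|\varphi_2\|_s+\|\varphi_3\|_{s+1}\le C_{\delta,M}\|\varphi\|_X .$$
This shows that $A\varphi\in\tilde X$ and that $A$ is bounded.

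The only point needing care, a bookkeeping check rather than a real obstacle, is the placement of the entries of $A$. The second component of $A\varphi$ is $-\partial_x^2\varphi_3$. Its symbol is $k^2$, and $k^4(1+k^2)^{r-2}\le(1+k^2)^r$. Hence $\varphi_3\in H^{s+1}$ lands in $H^{s-1}$, as required.

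The third component of $A\varphi$ is $-M^{-2}\varphi_2$, which stays in $H^s$.

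The first component has symbol $\sigma_3 ik-i\delta k^2$. It is controlled using $|k|^2(1+k^2)^{q-2}\le(1+k^2)^{q-1}$ and $|k|^4(1+k^2)^{q-2}\le(1+k^2)^{q}$, together with $\|\cdot\|_{q-1}\le\|\cdot\|_q$.

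Since all these inequalities hold uniformly in $k\in\mathbb{Z}$, the bound is valid for all real $q,s$, and the corollary follows.
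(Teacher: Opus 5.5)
Your proposal is correct and follows essentially the same route as the paper. The corollary is obtained by setting $r=s+1$ in the general estimate \eqref{A}, so that $W=X$ and $\tilde W=\tilde X$. This yields $\|A\varphi\|_{\tilde X}\le C_{\delta,M}\|\varphi\|_X$ with $C_{\delta,M}=\max\{1+|\delta|,M^{-2}\}$, and your componentwise multiplier checks simply re-verify the steps already contained in that estimate.
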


On the other hand, since $\mathcal{A}(k)$ is diagonalizable for each $k$, the exponential of a diagonalizable matrix can be computed directly from its eigenvalues and eigenvectors, which gives
\begin{equation}
    e^{\mathcal{A}(k)t}=\left(\begin{array}{ccc}e^{it(\sigma_3k-\delta k^2)} & 0 & 0\\ 0 & \cos \left(\frac{k}{M} t\right) & M k \sin \left(\frac{k}{M} t\right) \\ 0 & -\frac{1}{M k}\sin \left(\frac{k}{M} t\right) & \cos \left(\frac{k}{M} t\right)\end{array}\right).
\end{equation}
That is, solving the ODE \eqref{EDO} with respect to $t$ we obtain $\widehat{\Psi(t)}(k)=e^{\mathcal{A}(k) t} \widehat{\Psi_0}(k)$, for each $k\in \mathbb{Z}$.
\\

Then, by the inverse Fourier transform, the candidate solution of \eqref{sun-jin} is formally 
\begin{equation}
    \Psi(t)=\sum_{k\in \mathbb{Z}}e^{ikx}e^{\mathcal{A}(k) t} \widehat{\Psi_0}(k).
\end{equation}
To prove that this candidate is indeed a solution, we define the family of operators $\{S(t)\}_{t\geq 0}$ as
\begin{equation}
S(t)\varphi=\left( \paren{e^{\mathcal{A}(k) t} \widehat{\varphi}(k)}_{k \in \mathbb{Z} }\right)^\lor, \quad \varphi \in \paren{\mathcal{P}'}^3,    
\end{equation}

and we have:
\begin{teorema}
    $\{S(t)\}_{t\geq 0}$ is a $C_0$ semigroup on $X$.
\end{teorema}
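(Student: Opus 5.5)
The plan is to verify the three defining properties of a $C_0$ semigroup directly on the Fourier side, working mode by mode with the explicit matrices $e^{\mathcal{A}(k)t}$. These properties are: $S(t)\in\mathcal{B}(X)$ with a locally uniform bound in $t$, $S(0)=I_X$ together with $S(t+\tau)=S(t)S(\tau)$, and $\lim_{t\to 0^+}S(t)\varphi=\varphi$ in $X$.

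\emph{Step 1: boundedness on $X$.} First I would fix the $k=0$ entries, since the displayed formula for $e^{\mathcal{A}(k)t}$ contains $\frac{1}{Mk}\sin(\frac{k}{M}t)$. For $k=0$ one has $\mathcal{A}(0)=\begin{pmatrix}0&0&0\\0&0&0\\0&-\frac{1}{M^2}&0\end{pmatrix}$, which is nilpotent. Hence $e^{\mathcal{A}(0)t}=I-t\mathcal{A}(0)\cdot(-1)$, i.e.\ the $(3,2)$ entry is $-\frac{t}{M^2}$, which is the continuous extension of $-\frac{1}{Mk}\sin(\frac{k}{M}t)$ as $k\to0$.

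Next I would estimate each component of $S(t)\varphi$ in its weighted $\ell^2$ norm.
\begin{itemize}
\item The first component is a unimodular multiplier, so $\|(S(t)\varphi)_1\|_q=\|\varphi_1\|_q$.
\item For the second component, $|\cos|\le1$ and $(1+k^2)^{s/2}M|k||\sin(\frac{k}{M}t)|\,|\widehat{\varphi_3}(k)|\le M(1+k^2)^{(s+1)/2}|\widehat{\varphi_3}(k)|$. This gives $\|(S(t)\varphi)_2\|_s\le\|\varphi_2\|_s+M\|\varphi_3\|_{s+1}$.
\item For the third component with $k\ne0$, I would use $(1+k^2)^{1/2}/|k|\le\sqrt2$, so $(1+k^2)^{(s+1)/2}\frac{1}{M|k|}|\widehat{\varphi_2}(k)|\le\frac{\sqrt2}{M}(1+k^2)^{s/2}|\widehat{\varphi_2}(k)|$.
\item For the third component at $k=0$, the term is bounded by $\frac{t}{M^2}|\widehat{\varphi_2}(0)|$.
\end{itemize}
Altogether $\|S(t)\varphi\|_X\le C_M(1+t)\|\varphi\|_X$, so $S(t)\in\mathcal{B}(X)$ with norm bounded uniformly on compact time intervals. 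This is exactly where the choice of the index $s+1$ for $\phi$ is needed, since the off-diagonal entries gain or lose one power of $k$.

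\emph{Step 2: algebraic properties.} $S(0)=I_X$ is immediate since $e^{\mathcal{A}(k)\cdot0}=I$. The semigroup law follows modewise from $e^{\mathcal{A}(k)(t+\tau)}=e^{\mathcal{A}(k)t}e^{\mathcal{A}(k)\tau}$, a general fact for matrix exponentials. Composing the Fourier multipliers gives $S(t+\tau)=S(t)S(\tau)$ on $(\mathcal{P}')^3$, and hence on $X$.

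\emph{Step 3: strong continuity.} I expect this to be the main, though mild, obstacle, because it requires uniformity over infinitely many modes. I would write $\|S(t)\varphi-\varphi\|_X^2$ as sums over $k$ of weighted terms $|(e^{\mathcal{A}(k)t}-I)\widehat{\varphi}(k)|^2$. For each fixed $k$ every matrix entry is continuous in $t$, so each summand tends to $0$ as $t\to0^+$. By the estimates of Step 1, for $t\in[0,1]$ each summand is dominated by $C(1+k^2)^{q}|\widehat{\varphi_1}(k)|^2+C(1+k^2)^s|\widehat{\varphi_2}(k)|^2+C(1+k^2)^{s+1}|\widehat{\varphi_3}(k)|^2$, which is summable because $\varphi\in X$. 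Dominated convergence for series then gives $S(t)\varphi\to\varphi$ in $X$, completing the proof that $\{S(t)\}_{t\ge0}$ is a $C_0$ semigroup on $X$.
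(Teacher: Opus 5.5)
Your proposal is correct and follows essentially the same route as the paper: Fourier-side, mode-by-mode bounds with the same constants $\sqrt{2}/M$ (for $k\neq 0$) and $t/M^2$ (for $k=0$), the semigroup law from the matrix-exponential identity, and strong continuity via a summable domination (your dominated convergence for series plays the role of the paper's Weierstrass $M$-test argument). Your explicit computation $e^{\mathcal{A}(0)t}=I+t\mathcal{A}(0)$ via nilpotency treats the zero mode slightly more cleanly than the paper's limit argument, and it correctly sidesteps the paper's claim that $\mathcal{A}(k)$ is diagonalizable for every $k$, which fails at $k=0$.
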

\begin{proof}
    Let us show that $S(t)\in \mathcal{B}(X)$ for every $t> 0$ (the case $t=0$ is trivial). Let $t>0$ be fixed, and let $\varphi=(\varphi_1, \varphi_2, \varphi_3)\in X$. It is easy to see that $S(t)$ is a linear operator and that
\begin{equation}
    \widehat{S(t)\varphi}(k)=\left(\begin{array}{c}
e^{it(\sigma_3 k -\delta k^2) } \widehat{\varphi_1}(k) \\
\cos \left(\frac{k}{M} t\right) \widehat{\varphi_2}(k)+ Mk \sin \left(\frac{k}{M} t\right) \widehat{\varphi_3}(k) \\
-\frac{1}{Mk}\sin \left(\frac{k}{M} t\right) \widehat{\varphi_2}(k)+ \cos \left(\frac{k}{M} t\right) \widehat{\varphi_3}(k)
\end{array}\right).
\end{equation}

Now, note that 
\begin{equation}
    \lim_{x\rightarrow 0}\frac{\sin \left(\frac{x}{M} t\right)}{Mx}=\lim_{u\rightarrow 0}\frac{\sin (u)}{M\left(\frac{uM}{t} \right)}=\frac{t}{M^2},
\end{equation}

so
\begin{equation}\label{t/M^2}
    \lim_{k\rightarrow 0}\left(\left|\frac{\sin \left(\frac{k}{M} t\right)}{Mk} \right|\sqrt{1+k^2}\right)=\frac{t}{M^2}.
\end{equation}

Moreover, the function $f(x)=\frac{\sqrt{1+x^2}}{\abs{x}}$ is symmetric about the $y$-axis ($f(x)=f(-x)$) and decreasing for $x>0$, since
\begin{equation}
    f^{\prime}(x)=\frac{\frac{x^2}{\sqrt{1+x^2}}-\sqrt{1+x^2}}{x^2}=-\frac{1}{x^2\sqrt{1+x^2}}<0, \quad \text{for all} \quad x>0.
\end{equation}
So
\begin{equation}\label{2/M}
    \begin{aligned}
    \sup_{k\in \mathbb{Z}\smallsetminus \{0\}}\left|\frac{\sin \left(\frac{k}{M} t\right)}{Mk} \right|\sqrt{1+k^2}&\leq \sup_{k\in \mathbb{Z}\smallsetminus \{0\}}\left|\frac{1}{Mk} \right|\sqrt{1+k^2}\\
    &=\sup_{k\in \mathbb{Z}^+}\left|\frac{1}{Mk} \right|\sqrt{1+k^2}\\
    &=\left.\left(\left|\frac{1}{Mk} \right|\sqrt{1+k^2}\right)\right|_{k=1}\\
    &=\frac{\sqrt{2}}{M}.
\end{aligned}
\end{equation}

Defining
\begin{equation}
    C_{M,t}:=\max\left\{\frac{t}{M^2}, \frac{\sqrt{2}}{M}\right\},
\end{equation}

it follows from \eqref{t/M^2} and \eqref{2/M} that
\begin{equation}
    \sup_{k\in \mathbb{Z}}\left|\frac{\sin \left(\frac{k}{M} t\right)}{Mk} \right|\sqrt{1+k^2}\leq C_{M,t} .
\end{equation}

Then
\begin{equation}\label{C_{M,t}}
    \begin{aligned}
    \left( 2\pi \sum (1+k^2)^{s+1}\left|\frac{\sin \left(\frac{k}{M} t\right)}{Mk} \widehat{\varphi_2}(k)\right|^2\right)^{1/2}&\leq \left(\sup_{k\in \mathbb{Z}}\left(\left|\frac{\sin \left(\frac{k}{M} t\right)}{Mk} \right|\sqrt{1+k^2}\right)^2 2\pi \sum (1+k^2)^{s}\left| \widehat{\varphi_2}(k)\right|^2\right)^{1/2}\\
    &\leq C_{M,t}\norm{\varphi_2}_s.
\end{aligned}
\end{equation}

Thus
\begin{equation}
    \norm{S(t)\varphi}_{X}\leq \norm{\varphi_1}_q+\norm{\varphi_2}_s+M\norm{\varphi_3}_{s+1}+C_{M,t}\norm{\varphi_2}_s+\norm{\varphi_3}_{s+1}\leq K_{M,t}\norm{\varphi}_{X},
\end{equation}
where $K_{M,t}=\max\left\{M, C_{M,t}\right\}+1$. This shows $S(t)\in \mathcal{B}(X)$.
\\

Moreover
\begin{itemize}
    \item $S(0)\varphi=\left(  \widehat{\varphi}(k)\right)^\lor=\varphi$, that is, $S(0)=I_X$.
    \item $S(t+s)=\mathcal{F}^{-1}e^{\mathcal{A}(k)(t+s)}\mathcal{F}=\mathcal{F}^{-1}e^{\mathcal{A}(k)t}e^{\mathcal{A}(k)s}\mathcal{F}=\mathcal{F}^{-1}e^{\mathcal{A}(k)t}\mathcal{F}\mathcal{F}^{-1}e^{\mathcal{A}(k)s}\mathcal{F}=S(t)S(s).$
\end{itemize}

That is, $\{S(t)\}_{t\geq 0}$ is a semigroup on $X$. To see that it is of class $C_0$, it remains to show that 
$$\lim_{t\rightarrow 0^+}\norm{S(t)\varphi-\varphi}_{X}=0.$$

It is clear that
\begin{equation}\label{series}
    \begin{aligned}
        \norm{S(t)\varphi-\varphi}_{X}&  =\left(2\pi\sum_{k=-\infty}^{\infty}(1+k^2)^{q}\left|e^{it(\sigma_3 k -\delta k^2) } \widehat{\varphi_1}(k)-\widehat{\varphi_1}(k)\right|^2\right)^{1/2}\\
        & +\left(2\pi\sum_{k=-\infty}^{\infty}(1+k^2)^{s}\left|\cos \left(\frac{k}{M} t\right)\widehat{\varphi_2}(k)-\widehat{\varphi_2}(k)+Mk\sin \left(\frac{k}{M} t\right)\widehat{\varphi_3}(k)\right|^2\right)^{1/2}\\
        & + \left(2\pi\sum_{k=-\infty}^{\infty}(1+k^2)^{s+1}\left|-\frac{1}{Mk}\sin \left(\frac{k}{M} t\right) \widehat{\varphi_2}(k)+\cos \left(\frac{k}{M} t\right)\widehat{\varphi_3}(k)-\widehat{\varphi_3}(k)\right|^2\right)^{1/2}.
    \end{aligned}
\end{equation}

For the first series, note that
\begin{equation}\label{w1}
  (1+k^2)^{q}\abs{e^{it(\sigma_3 k -\delta k^2) } \widehat{\varphi_1}(k)-\widehat{\varphi_1}(k)}^2 \leq (1+k^2)^{q}\abs{2\widehat{\varphi_1}(k)}^2=4(1+k^2)^{q}\abs{\widehat{\varphi_1}(k)}^2, \quad k\in \mathbb{Z},
\end{equation}

and since $\varphi_1 \in H^q(\mathbb{T})$,
$$\sum _{k=-\infty}^{\infty}(1+k^2)^{q}\abs{\widehat{\varphi_1}(k)}^2<\infty.$$
By the Weierstrass $M$-test, the series 
\begin{equation}
    \sum_{k=-\infty}^{\infty}(1+k^2)^{q}\left|e^{it(\sigma_3 k -\delta k^2) } \widehat{\varphi_1}(k)-\widehat{\varphi_1}(k)\right|^2
\end{equation}
converges uniformly for all $t\geq 0$. 
\\

For the second series, since $(a+b)^2\leq 2(a^2+b^2)$ for all $a, b \in \mathbb{R}$,

\begin{equation}\label{w2}
\begin{split}
    (1+k^2)^{s}&\left|\cos \left(\frac{k}{M} t\right)\widehat{\varphi_2}(k)-\widehat{\varphi_2}(k)+Mk\sin \left(\frac{k}{M} t\right)\widehat{\varphi_3}(k)\right|^2 \\
    &\leq 8(1+k^2)^{s}\abs{\widehat{\varphi_2}(k)}^2+2M^2(1+k^2)^{s+1}\abs{\widehat{\varphi_3}(k)}^2,
\end{split}
\end{equation}
and since $\varphi_2 \in H^s(\mathbb{T})$ and $\varphi_3 \in H^{s+1}(\mathbb{T})$, the right-hand side is $k$-summable. By the Weierstrass $M$-test again, the series
\begin{equation}
    \sum _{k=-\infty}^{\infty}(1+k^2)^{s}\left|\cos \left(\frac{k}{M} t\right)\widehat{\varphi_2}(k)-\widehat{\varphi_2}(k)+Mk\sin \left(\frac{k}{M} t\right)\widehat{\varphi_3}(k)\right|^2
\end{equation}
converges uniformly for all $t\geq0$.
\\

Similarly, using \eqref{C_{M,t}}, the series

\begin{equation}
\sum_{k=-\infty}^{\infty}(1+k^2)^{s+1}\left|-\frac{1}{Mk}\sin \left(\frac{k}{M} t\right) \widehat{\varphi_2}(k)+\cos \left(\frac{k}{M} t\right)\widehat{\varphi_3}(k)-\widehat{\varphi_3}(k)\right|^2
\end{equation}

converges uniformly for $t\geq0$. 
\\

Since the three series in \eqref{series} converge uniformly in $t$, uniform convergence lets us interchange the limit $t\to 0^+$ with the sum in each series. In every term, each summand tends to $0$ as $t\to 0^+$ (using $e^{i0}=1$, $\cos(0)=1$, $\sin(0)=0$), so
\begin{equation}
    \lim_{t\rightarrow 0^+} \norm{S(t)\varphi-\varphi}_{X} =0.
\end{equation}

This completes the proof.
\end{proof}
\medskip

Now $S(t)\Psi_0$ is the solution of the linear equation. More precisely:

\begin{teorema}\label{solución de 3.2} Let $\Psi_0\in X$. Then $\Psi(t)=S(t)\Psi_0$, $t\geq 0$, is the unique function satisfying
\begin{equation}
  \Psi(t) \in C\paren{[0,\infty);X}, \quad\lim _{h \rightarrow 0}\left\|\frac{\Psi(t+h)-\Psi(t)}{h}-A \Psi(t)\right\|_{\tilde{X}}=0, \quad \Psi (0)= \Psi_0 . 
\end{equation}
That is, $\Psi(t)$ is the unique solution of \eqref{sun-jin}. The above convergence is uniform with respect to $t\geq 0$.
\end{teorema}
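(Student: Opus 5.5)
We split the argument into existence, the uniform differentiability estimate, and uniqueness, working entirely on the Fourier side.

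\emph{Existence.} Continuity of $\Psi(t)=S(t)\Psi_0$ in $X$ follows from the $C_0$ property in the previous theorem together with the semigroup law. For right continuity at $t$, write $S(t+h)\Psi_0-S(t)\Psi_0=S(t)(S(h)\Psi_0-\Psi_0)$ and use $\norm{S(t)}_{\mathcal B(X)}\le K_{M,t}$. Left continuity needs a bound on $K_{M,\tau}$ for $\tau$ in compact intervals, which holds because $C_{M,\tau}$ is increasing in $\tau$. Alternatively, one can repeat the Weierstrass $M$-test argument directly at an arbitrary $t_0$, since each Fourier multiplier is continuous in $t$ and the dominating bounds \eqref{w1}, \eqref{w2} and \eqref{C_{M,t}} are locally uniform in $t$.

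\emph{Uniform difference-quotient estimate.} Let $\widehat{D_h}(k)=\frac{e^{\mathcal A(k)(t+h)}-e^{\mathcal A(k)t}}{h}-\mathcal A(k)e^{\mathcal A(k)t}$. By the mean value theorem applied componentwise, $D_h$ equals $\frac1h\int_t^{t+h}\mathcal A(k)\big(e^{\mathcal A(k)\tau}-e^{\mathcal A(k)t}\big)\,d\tau$. We then estimate each entry against the weights of $\tilde X$ versus $X$.

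For the first component, $\big|\frac{e^{i\lambda h}-1}{h}-i\lambda\big|\le\min(2|\lambda|,|\lambda|^2|h|)$ with $\lambda=\sigma_3k-\delta k^2$. Since $|\lambda|\lesssim 1+k^2$, the weighted summand is dominated by $C(1+k^2)^q|\widehat{\psi_0}(k)|^2$, independently of $t$ and $h$, and it tends to $0$ pointwise in $k$ as $h\to0$.

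For the acoustic block, the entries $\cos(\frac kMt)$, $Mk\sin$ and $\frac{\sin}{Mk}$ all have derivatives of size $\frac{k}{M}$ times a bounded quantity. Losing one derivative is allowed, since $\tilde X$ has $s-1$ in place of $s$ and $s$ in place of $s+1$. The main obstacle is uniformity in $t\ge0$ for the $(3,2)$ entry, $-\frac{1}{Mk}\sin(\frac kMt)$ acting on $\widehat{\rho_0}$. Its difference quotient is $-\frac1{M^2}\cos(\cdot)$-type plus a remainder, which is bounded uniformly in $t$. So only the mean-value bound is used, not $K_{M,t}$, which grows in $t$. The $k=0$ mode, where this entry is $-\frac{t}{M^2}$, is linear in $t$, so its difference quotient is exact and contributes no error.

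With these $t$-independent summable majorants, the $M$-test gives $\sup_{t\ge0}\norm{\frac{\Psi(t+h)-\Psi(t)}{h}-A\Psi(t)}_{\tilde X}\to0$, and $\Psi(0)=\Psi_0$ is immediate.

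\emph{Uniqueness.} Let $\Phi$ be a second function with the stated properties, and set $\Theta=\Psi-\Phi$. Then $\Theta\in C([0,\infty);X)$, $\Theta$ is differentiable in $\tilde X$ with $\Theta'=A\Theta$, and $\Theta(0)=0$. Taking the $k$-th Fourier coefficient, which is continuous on $H^{r}(\mathbb T)$ for every $r$, shows that $\widehat{\Theta(t)}(k)$ is a $C^1$ solution of the ODE \eqref{EDO} with zero initial datum. By ODE uniqueness it vanishes for every $k$, so $\Theta\equiv0$. This also justifies calling $\Psi$ the unique solution of \eqref{sun-jin}.
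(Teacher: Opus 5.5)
Your argument is correct. For continuity and uniqueness it follows the paper: strong continuity of the $C_0$ semigroup, and the fact that the Fourier coefficients of any solution solve the decoupled ODE \eqref{EDO}.

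The two routes differ in the derivative estimate. The paper runs the Fourier-side $M$-test only for $h>0$. For $h<0$ it uses the factorization
\begin{equation*}
\frac{S(t+h)\Psi_0-S(t)\Psi_0}{h}-AS(t)\Psi_0=S(t+h)\left(\frac{S(-h)\Psi_0-\Psi_0}{-h}-AS(-h)\Psi_0\right)
\end{equation*}
together with the operator bound $K_{M,t}$. This is the standard abstract semigroup step and avoids redoing the multiplier estimates. However, $K_{M,t}$ grows linearly in $t$, and this is not an artifact: the $k=0$ entry $-t/M^2$ makes $\norm{S(t)}_{\mathcal B(X)}$ unbounded. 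So for $h<0$ that argument only gives uniformity on bounded time intervals.

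You instead use the same $t$- and $h$-independent majorants for both signs of $h$, and you observe that the $k=0$ mode carries no error. This actually proves the uniformity on $[0,\infty)$ that the statement asserts, which the paper's own argument does not deliver for $h<0$.

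One step you should make explicit. The $M$-test gives uniform convergence of the series in $(t,h)$. To conclude $\sup_{t\ge0}\to0$, you also need each fixed-$k$ summand to tend to $0$ uniformly in $t$. This follows from your integral representation: for fixed $k\neq0$, the entries of $\mathcal A(k)^2e^{\mathcal A(k)\tau}$ are bounded in $\tau$, which gives $\abs{\widehat{D_h}(k)}\le C_k\abs{h}$ entrywise. With that added, a finite-plus-tail splitting finishes the argument.
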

\begin{proof}
Since $\{S(t)\}$ is a $C_0$ semigroup, $t\mapsto S(t)\Psi_0$ is a continuous function of $t$, so $\Psi(t) \in C\paren{[0,\infty);X}$. Now let $t\geq0, h>0$. It is clear that
$$
    \left({\frac{\Psi(t+h)-\Psi(t)}{h}}\right)^{\land}(k)  =\left({\frac{S(t+h)\Psi_0-S(t)\Psi_0}{h}}\right)^{\land}(k)$$
    $$=\frac{1}{h
    }\left(\begin{array}{c}
e^{i(t+h)(\sigma_3 k -\delta k^2) } \widehat{\psi_0}(k)-e^{it(\sigma_3 k -\delta k^2) } \widehat{\psi_0}(k) \\
\cos \left(\frac{k}{M} (t+h)\right) \widehat{\rho_0}(k)-\cos \left(\frac{k}{M} t\right) \widehat{\rho_0}(k)+ Mk \sin \left(\frac{k}{M} (t+h)\right) \widehat{\phi_0}(k)-Mk \sin \left(\frac{k}{M} t\right) \widehat{\phi_0}(k) \\
-\frac{1}{Mk}\sin \left(\frac{k}{M} (t+h)\right) \widehat{\rho_0}(k)+\frac{1}{Mk}\sin \left(\frac{k}{M} t\right) \widehat{\rho_0}(k)+ \cos \left(\frac{k}{M} (t+h)\right) \widehat{\phi_0}(k)-\cos \left(\frac{k}{M} t\right) \widehat{\phi_0}(k)
\end{array}\right),
$$

and

 $$\begin{aligned}
    \widehat{A\Psi(t)}(k) & =\widehat{AS(t)\Psi_0}(k)\\
    & = \left(\begin{array}{c}
(i\sigma_3k-i \delta k^2)e^{it(\sigma_3 k -\delta k^2) } \widehat{\psi_0}(k)  \\
-\frac{k}{M}\sin \left(\frac{k}{M} t\right) \widehat{\rho_0}(k)+ k^2\cos \left(\frac{k}{M} t\right) \widehat{\phi_0}(k)
\\
 -\frac{1}{M^2}\cos \left(\frac{k}{M} t\right) \widehat{\rho_0}(k)- \frac{k}{M} \sin \left(\frac{k}{M} t\right) \widehat{\phi_0}(k)
\end{array}\right).
 \end{aligned}$$
 
Writing $f^1_{k,x}=\cos \left(\frac{k}{M} x\right)$ and $f^2_{k,x}=\sin \left(\frac{k}{M} x\right)$, we get
$$
    \left({\frac{\Psi(t+h)-\Psi(t)}{h}}-A\Psi(t)\right)^{\land}(k) $$
    $$\large{=\left(\begin{array}{c}
\frac{e^{i(t+h)(\sigma_3 k -\delta k^2) } \widehat{\psi_0}(k)-e^{it(\sigma_3 k -\delta k^2) } \widehat{\psi_0}(k)}{h} -(i\sigma_3 k -i\delta k^2)e^{it(\sigma_3 k -\delta k^2)}\widehat{\psi_0}(k)\\
\frac{f^1_{k,t+h}\widehat{\rho_0}(k)-f^1_{k,t}\widehat{\rho_0}(k)+Mkf^2_{k,t+h}\widehat{\phi_0}(k)-Mkf^2_{k,t}\widehat{\phi_0}(k)}{h}-\left(-\frac{k}{M}f^2_{k,t}\widehat{\rho_0}(k)+k^2f^1_{k,t}\widehat{\phi_0}(k)\right) \\
\frac{-\frac{1}{Mk}f^2_{k,t+h} \widehat{\rho_0}(k)+\frac{1}{Mk}f^2_{k,t} \widehat{\rho_0}(k)+ f^1_{k,t+h} \widehat{\phi_0}(k)-f^1_{k,t} \widehat{\phi_0}(k)}{h}-\left(-\frac{1}{M^2}f^1_{k,t} \widehat{\rho_0}(k)- \frac{k}{M} f^2_{k,t} \widehat{\phi_0}(k)\right)
\end{array}\right).}
$$

Therefore
$$\left\|\frac{\Psi(t+h)-\Psi(t)}{h}-A \Psi(t)\right\|_{\tilde{X}}$$
$$=\left(2\pi\sum_{k=-\infty}^{\infty}(1+k^2)^{q-2}\left|\frac{e^{i(t+h)(\sigma_3 k -\delta k^2) }-e^{it(\sigma_3 k -\delta k^2) }}{h}-(i\sigma_3 k -i\delta k^2)e^{it(\sigma_3 k -\delta k^2) }\right|^2|\widehat{\psi_0}(k)|^2\right)^{1/2}$$
$$+\left(2\pi\sum_{k=-\infty}^{\infty}(1+k^2)^{s-1}\left|\left(\frac{f^1_{k,t+h}-f^1_{k,t}}{h}+\frac{k}{M}f^2_{k,t}\right)\widehat{\rho_0}(k)+\left(\frac{Mk(f^2_{k,t+h}-f^2_{k,t})}{h}-k^2f^1_{k,t}\right)\widehat{\phi_0}(k)\right|^2\right)^{1/2}$$
\begin{equation}\label{3.29}
    +\left(2\pi\sum_{k=-\infty}^{\infty}(1+k^2)^{s}\left|\left(\frac{-\frac{1}{Mk}(f^2_{k,t+h}-f^2_{k,t})}{h}+\frac{1}{M^2}f^1_{k,t}\right)\widehat{\rho_0}(k)+\left(\frac{f^1_{k,t+h}-f^1_{k,t}}{h}+\frac{k}{M}f^2_{k,t}\right)\widehat{\phi_0}(k)\right|^2\right)^{1/2}.
\end{equation}

We now check that the three series in \eqref{3.29} converge uniformly in $h$. 
\\

Since  $e^{ix(\sigma_3 k -\delta k^2) }\in C^1([0,\infty))$, the Mean Value Theorem gives some $a\in (0,h)$ such that

$$\left|\frac{e^{ih(\sigma_3 k -\delta k^2) }-1}{h}\right|=\left|(i\sigma_3 k -i\delta k^2)e^{ia(\sigma_3 k -\delta k^2) }\right|=|\sigma_3 k -\delta k^2|\leq |k|+|\delta||k|^2 \leq 2C_{\delta}k^2,$$

where $C_{\delta}:=\max\{1,|\delta|\}$. Then

\begin{equation}\label{3.30}
    \begin{aligned}
    2\pi&\sum_{k=-\infty}^{\infty}(1+k^2)^{q-2}\left|\frac{e^{i(t+h)(\sigma_3 k -\delta k^2) }-e^{it(\sigma_3 k -\delta k^2) }}{h}-(i\sigma_3 k -i\delta k^2)e^{it(\sigma_3 k -\delta k^2) }\right|^2|\widehat{\psi_0}(k)|^2\\
    &=2\pi\sum_{k=-\infty}^{\infty}(1+k^2)^{q-2}\left|e^{it(\sigma_3 k -\delta k^2) }\right|^2\left|\frac{e^{ih(\sigma_3 k -\delta k^2) }-1}{h}-(i\sigma_3 k -i\delta k^2)\right|^2|\widehat{\psi_0}(k)|^2\\
    &\leq2\pi\sum_{k=-\infty}^{\infty}(1+k^2)^{q-2}\left|4C_{\delta}k^2\right|^2|\widehat{\psi_0}(k)|^2\\
    &\leq32\pi C_{\delta}^2\sum_{k=-\infty}^{\infty}(1+k^2)^{q}|\widehat{\psi_0}(k)|^2<\infty.
\end{aligned}
\end{equation}

The sum formulas for sine and cosine give

\begin{equation*}
    f^1_{k,t+h}=f^1_{k,t}f^1_{k,h}-f^2_{k,t}f^2_{k,h}, \qquad
    f^2_{k,t+h}=f^2_{k,t}f^1_{k,h}+f^2_{k,h}f^1_{k,t}.
\end{equation*}

Applying the Mean Value Theorem again, there exist $b, c\in (0,h)$ such that

$$\left|\frac{f^1_{k,h}-1}{h}\right|=\left|-\frac{k}{M}\sin \left(\frac{k}{M} b\right)\right|\leq\left|\frac{k}{M}\right|, \qquad
\left|\frac{f^2_{k,h}}{h}\right|=\left|\frac{k}{M}\cos \left(\frac{k}{M} c\right)\right|\leq\left|\frac{k}{M}\right|.$$

Using these two bounds, we obtain
 \begin{equation}\label{3.31}
 \begin{aligned}
     2\pi&\sum_{k=-\infty}^{\infty}(1+k^2)^{s-1}\left|\left(\frac{f^1_{k,t+h}-f^1_{k,t}}{h}+\frac{k}{M}f^2_{k,t}\right)\widehat{\rho_0}(k)\right|^2\\
     &=2\pi\sum_{k=-\infty}^{\infty}(1+k^2)^{s-1}\left|f^1_{k,t}\left(\frac{f^1_{k,h}-1}{h}\right)-f^2_{k,t}\left(\frac{f^2_{k,h}}{h}\right)+\frac{k}{M}f^2_{k,t}\right|^2|\widehat{\rho_0}(k)|^2\\
     &\leq2\pi\sum_{k=-\infty}^{\infty}(1+k^2)^{s-1}\left|\frac{3k}{M}\right|^2|\widehat{\rho_0}(k)|^2\leq\frac{18\pi}{M^2}\sum_{k=-\infty}^{\infty}(1+k^2)^{s}|\widehat{\rho_0}(k)|^2<\infty,
 \end{aligned}
 \end{equation}
and, by the same argument applied to the second component of $\widehat{\phi_0}(k)$,
\begin{equation}\label{3.32}
\begin{aligned}
    2\pi&\sum_{k=-\infty}^{\infty}(1+k^2)^{s-1}\left|\left(\frac{Mk(f^2_{k,t+h}-f^2_{k,t})}{h}-k^2f^1_{k,t}\right)\widehat{\phi_0}(k)\right|^2\\
    &=2\pi\sum_{k=-\infty}^{\infty}(1+k^2)^{s-1}\left|Mkf^2_{k,t}\left(\frac{f^1_{k,h}-1}{h}\right)+Mkf^1_{k,t}\left(\frac{f^2_{k,h}}{h}\right)-k^2f^1_{k,t}\right|^2|\widehat{\phi_0}(k)|^2\\
    & \leq2\pi\sum_{k=-\infty}^{\infty}(1+k^2)^{s-1}\left|3k^2\right|^2|\widehat{\phi_0}(k)|^2\leq18\pi\sum_{k=-\infty}^{\infty}(1+k^2)^{s+1}|\widehat{\phi_0}(k)|^2<\infty.
\end{aligned}
\end{equation}

The two remaining terms in \eqref{3.29} (involving $\widehat{\rho_0}(k)$ and $\widehat{\phi_0}(k)$ in the third component) are bounded by exactly the same method, using the same two Mean Value Theorem estimates above; the resulting bounds are
\begin{equation}\label{3.33}
    2\pi\sum_{k=-\infty}^{\infty}(1+k^2)^{s}\left|\left(\frac{-\frac{1}{Mk}(f^2_{k,t+h}-f^2_{k,t})}{h}+\frac{1}{M^2}f^1_{k,t}\right)\widehat{\rho_0}(k)\right|^2\leq\frac{18\pi}{M^4}\sum_{k=-\infty}^{\infty}(1+k^2)^{s}|\widehat{\rho_0}(k)|^2<\infty,
\end{equation}
\begin{equation}\label{3.34}
    2\pi\sum_{k=-\infty}^{\infty}(1+k^2)^{s}\left|\left(\frac{f^1_{k,t+h}-f^1_{k,t}}{h}+\frac{k}{M}f^2_{k,t}\right)\widehat{\phi_0}(k)\right|^2\leq\frac{18\pi}{M^2}\sum_{k=-\infty}^{\infty}(1+k^2)^{s+1}|\widehat{\phi_0}(k)|^2<\infty.
\end{equation}

With \eqref{3.30}, \eqref{3.31}, \eqref{3.32}, \eqref{3.33} and \eqref{3.34}, we have shown that the three series in \eqref{3.29} converge uniformly for $h>0$, by the Weierstrass $M$-test. This uniform convergence lets us take the limit $h\to 0^+$ term by term inside each series. In every term, the difference quotient tends to the corresponding derivative as $h\to0^+$, so each term cancels exactly with the corresponding part coming from $\widehat{A\Psi(t)}(k)$, giving
\begin{equation}\label{3.35}
    \lim _{h \rightarrow 0^+}\left\|\frac{\Psi(t+h)-\Psi(t)}{h}-A \Psi(t)\right\|_{\tilde{X}}=0.
\end{equation}

Let us now consider the case $0<-h<t$. Using the semigroup property $S(t+h)=S(t)S(h)$ and the boundedness of $S(t+h)$ established above,
\begin{equation}
\begin{aligned}
         \left\|\frac{\Psi(t+h)-\Psi(t)}{h}-A \Psi(t)\right\|_{\tilde{X}}  &=     \left\|\frac{S(t+h)\Psi_0-S(t)\Psi_0}{h}-A S(t)\Psi_0\right\|_{\tilde{X}}  \\
         &=\left\|S(t+h)\left(\frac{S(-h)\Psi_0-\Psi_0}{-h}-A S(-h)\Psi_0\right)\right\|_{\tilde{X}}\\
         &\leq K_{M,t} \left\|\frac{S(-h)\Psi_0-\Psi_0}{-h}-A\Psi_0\right\|_{\tilde{X}}+K_{M,t} C_{\delta,M}\left\|\Psi_0- S(-h)\Psi_0\right\|_{X}.
\end{aligned}
\end{equation}

By what was shown for $h>0$ and the strong continuity of the semigroup, both terms on the right tend to $0$ as $h\to0^-$, so
\begin{equation}\label{3.37}
    \lim _{h \rightarrow 0^-}\left\|\frac{\Psi(t+h)-\Psi(t)}{h}-A \Psi(t)\right\|_{\tilde{X}}=0.
\end{equation}
With \eqref{3.35} and \eqref{3.37} we have shown that, for every $t\geq0$, $\partial_t\Psi(t)=A\Psi (t)$ in the space $\tilde{X}$. Moreover $\Psi (0)=S(0)\Psi_0=\Psi_0$, so $\Psi(t)$ is a solution of the IVP \eqref{sun-jin}.
\\

For uniqueness, suppose $u(t) \in C\paren{[0,\infty);X}$ is a solution of \eqref{sun-jin}. Recall that the Fourier transform of a periodic distribution $f$ is defined by $\widehat{f}(k)=\frac{1}{2\pi}\langle f, e^{-ik(\cdot)}\rangle$, so
\begin{equation}
    \partial_t\widehat{u(t)}(k) = \frac{1}{2\pi}\lim _{h \rightarrow 0}\left\langle \frac{u(t+h)-u(t)}{h}, e^{-ik(\cdot)}\right\rangle, \quad k \in \mathbb{Z}.
\end{equation}
Since the strong convergence
$$\lim _{h \rightarrow 0}\left\|\frac{u(t+h)-u(t)}{h}-A u(t)\right\|_{\tilde{X}}=0$$
implies convergence when tested against any fixed $\varphi\in\mathcal P$ (in particular against $\varphi=e^{-ik(\cdot)}$), we get
\begin{equation}
    \partial_t\widehat{u(t)}(k)=\frac{1}{2\pi}\langle Au(t), e^{-ik(\cdot)}\rangle=\mathcal{A}(k)\widehat{u(t)}(k),
\end{equation}
so $u(t)$ must satisfy the decoupled ODE system
\begin{equation}\label{EDOu}
    \left\{\begin{array}{l}
\partial_t\widehat{u(t)}(k)=\mathcal{A}(k)\widehat{u(t)}(k), \quad k\in \mathbb{Z}, t\geq 0 ,\\
\widehat{u(0)}(k)=\widehat{\Psi_0}(k) .
\end{array}\right.
\end{equation} 
whose unique solution is $\widehat{u(t)}(k)=e^{\mathcal{A}(k) t} \widehat{\Psi_0}(k)$, for each $k\in \mathbb{Z}$. Thus
\begin{equation}
    u(t)=\sum_{k\in \mathbb{Z}}e^{ikx}e^{\mathcal{A}(k) t} \widehat{\Psi_0}(k)=S(t)\Psi_0=\Psi(t).
\end{equation}
\end{proof}
\medskip
\textbf{\textit{Proof of Theorem \ref{Teorema 3.1}.}} By Theorem \ref{solución de 3.2}, it remains to check that $\Psi(t) \in C^1\left([0,\infty);\tilde{X}\right)$ and that the solution depends continuously on the initial data.
\\

Let $t\geq 0$. Again by Theorem \ref{solución de 3.2},
\begin{equation}
\begin{aligned}
        \left\|\partial_t\Psi(t+h)-\partial_t\Psi(t)\right\|_{\tilde{X}}&=\left\|A(S(t+h)\Psi_o-S(t)\Psi_o)\right\|_{\tilde{X}}\leq C_{\delta,M}\left\|S(t+h)\Psi_o-S(t)\Psi_o\right\|_{X}.
\end{aligned}
\end{equation}
By continuity of $S(t)$, the right-hand side tends to $0$ as $h\to0$, so $\Psi(t)$ is continuously differentiable.
\\

On the other hand, let $\Psi_{0,1},\Psi_{0,2} \in X$, and let $\Psi_1(t), \Psi_2(t) \in  C\paren{[0,\infty);X}$ be the associated solutions of \eqref{sun-jin}. Then
\begin{equation}
    \sup_{t\in [0,\infty)}\norm{\Psi_1(t)-\Psi_2(t)}_X=\sup_{t\in [0,\infty)}\norm{S(t)\Psi_{0,1}-S(t)\Psi_{0,2}}_X\leq K_{M,t}\norm{\Psi_{0,1}-\Psi_{0,2}}_X .
\end{equation}
\hfill\qedsymbol

\subsection{Modified Zakharov-Rubenchik System}\label{subseccion ZR modificado}

As explained in the derivation of the Benney-Roskes system from the Euler equations with free surface (following Obrecht \cite{obrecht2015approximation}), the second and third equations of the Zakharov-Rubenchik system may be modified by adding $O(\epsilon)$ terms without changing the order of consistency of the system. We will therefore consider the following modified system:
\begin{equation}\label{ZR modificado}
    \left\{\begin{array}{l}
\partial_t\psi-\sigma_3 \partial_x\psi-i\epsilon \delta \partial_x^2\psi+i\epsilon\left\{\sigma_2|\psi|^2+W\left(\rho+D \partial_x\phi\right)\right\} \psi=0, \\
\partial_t\rho+\partial_x^2\phi+D\partial_x\left(|\psi|^2\right)+\epsilon L_1=0, \\
\partial_t\phi+\frac{1}{M^2} \rho+|\psi|^2+\epsilon L_2=0,
\end{array}\right.
\tag{mZR}
\end{equation}

with initial conditions $\psi (t=0)=\psi_0\in H^s(\mathbb{T})$, $\rho(t=0)=\rho_0\in H^s(\mathbb{T})$, $\phi(t=0)=\phi_0\in H^{s+1}(\mathbb{T})$, $s>1/2$, and we will choose $L_1,L_2$ conveniently.

\subsection{Auxiliary System Derived from the Modified Zakharov-Rubenchik System}\label{deduccion de sistema auxiliar}
In this section the variables $\rho, \phi$ will be replaced by $U, V$, and the values of $L_1, L_2$ will be chosen so that the resulting second and third equations are linear. Local well-posedness will be shown for this new system.
\\

Multiplying the first equation of \eqref{ZR modificado} by $\overline{\psi}$ and taking the real part, we have
$$Re\{\partial_t\psi\overline{\psi}\}-Re\{\sigma_3 \partial_x\psi\overline{\psi}\}-Re\{i\epsilon \delta \partial_x^2\psi\overline{\psi}\}+Re\{i\epsilon\left\{\sigma_2|\psi|^2+W\left(\rho+D \partial_x\phi\right)\right\} \psi\overline{\psi}\}=0.$$

Since $Re\{z\}=\frac{1}{2}\left(z+\overline{z}\right)$ for all $z\in \mathbb{C}$, and $Re\{i\epsilon\left\{\sigma_2|\psi|^2+W\left(\rho+D \partial_x\phi\right)\right\} |\psi|^2\}=0$, then
$$\begin{aligned}
    \frac{1}{2}\left(i\epsilon \delta \partial_x^2\psi\overline{\psi}+(-i)\epsilon \delta \partial_x^2\overline{\psi}\psi\right)&=\frac{1}{2}\left(\partial_t\psi\overline{\psi}+\partial_t\overline{\psi}\psi\right)-\frac{1}{2}\left(\sigma_3 \partial_x\psi\overline{\psi}+\sigma_3 \partial_x\overline{\psi}\psi\right)=\frac{1}{2}\partial_t\left(|\psi|^2\right)-\frac{\sigma_3}{2}\partial_x\left( |\psi|^2\right).
\end{aligned}$$

That is,
\begin{equation}\label{igualdad para simplificar}
    \partial_t\left(|\psi|^2\right)-\sigma_3\partial_x\left(|\psi|^2\right)=i\epsilon \delta \partial_x^2\psi\overline{\psi}-i\epsilon \delta \partial_x^2\overline{\psi}\psi.
\end{equation}

Now set $Q=M\partial_x\phi$ and differentiate the third equation of \eqref{ZR modificado} with respect to $x$:
\begin{equation}\label{4.17}
    \left\{\begin{array}{l}
\partial_t\rho+\frac{1}{M}\partial_xQ+D\partial_x\left(|\psi|^2\right)+\epsilon L_1=0, \\
\partial_tQ+\frac{1}{M} \partial_x\rho+M\partial_x\left(|\psi|^2\right)+\epsilon M \partial_xL_2=0.
\end{array}\right.
\end{equation}

Following Obrecht \cite{obrecht2015approximation}, we perform the change of variables
\begin{equation}
    \rho=U+\alpha_1|\psi|^2, \hspace{1cm} Q=V+\alpha_2|\psi|^2. 
\end{equation}
Substituting into \eqref{4.17}:
\begin{equation}\label{sistema U, V}
     \left\{\begin{array}{l}
\partial_tU+\frac{1}{M}\partial_xV+\alpha_1\partial_t\left(|\psi|^2\right)+\left(\frac{1}{M}\alpha_2+D\right)\partial_x\left(|\psi|^2\right)+\epsilon L_1=0, \\
\partial_tV+\frac{1}{M} \partial_xU+\alpha_2\partial_t\left(|\psi|^2\right)+\left(\frac{1}{M}\alpha_1+M\right)\partial_x\left(|\psi|^2\right)+\epsilon M \partial_xL_2=0.
\end{array}\right.
\end{equation}

Next, we choose $\alpha_1,\alpha_2$ so that
\begin{equation}
    \left\{\begin{array}{l}
 \frac{1}{M}\alpha_2+D=-\sigma_3\alpha_1,\\
\frac{1}{M}\alpha_1+M=-\sigma_3\alpha_2.
\end{array}\right.
\end{equation}

This system has a unique solution if and only if $(\sigma_3)^2-\frac{1}{M^2}\neq0$. Since $\sigma_3 \in \{1,-1\}$, this holds provided $M\neq 1$. With this choice, \eqref{sistema U, V} becomes 
\begin{equation}
     \left\{\begin{array}{l}
\partial_tU+\frac{1}{M}\partial_xV+\alpha_1\left(\partial_t\left(|\psi|^2\right)-\sigma_3\partial_x\left(|\psi|^2\right)\right)+\epsilon L_1=0 ,\\
\partial_tV+\frac{1}{M} \partial_xU+\alpha_2\left(\partial_t\left(|\psi|^2\right)-\sigma_3\partial_x\left(|\psi|^2\right)\right)+\epsilon M \partial_xL_2=0,
\end{array}\right.
\end{equation}
and, using \eqref{igualdad para simplificar},
\begin{equation}
     \left\{\begin{array}{l}
\partial_tU+\frac{1}{M}\partial_xV+i\epsilon\delta\alpha_1\left( \partial_x^2\psi\overline{\psi}- \partial_x^2\overline{\psi}\psi\right)+\epsilon L_1=0 ,\\
\partial_tV+\frac{1}{M} \partial_xU+i\epsilon\delta\alpha_2\left( \partial_x^2\psi\overline{\psi}- \partial_x^2\overline{\psi}\psi\right)+\epsilon M \partial_xL_2=0.
\end{array}\right.
\end{equation}

So, choosing
$$L_1=-i\delta\alpha_1\left( \partial_x^2\psi\overline{\psi}- \partial_x^2\overline{\psi}\psi\right), \quad L_2=-\frac{i}{M}\delta\alpha_2\left( \partial_x\psi\overline{\psi}- \partial_x\overline{\psi}\psi\right),$$

and writing the first equation of \eqref{ZR modificado} in terms of $\psi$, $U$, $V$, the modified Zakharov-Rubenchik system becomes
\begin{equation}\label{sistema nuevo}
    \left\{\begin{array}{l}
\partial_t\Psi=A\Psi+B(\Psi) , \\
\Psi(t=0)=\Psi_0, 
\end{array}\right.
\end{equation}
where
$$
\Psi=\left(\begin{array}{c}
\psi \\
U \\
V
\end{array}\right),\quad A=\left(\begin{array}{ccc}
\sigma_3\partial_x+i \epsilon\delta\partial_x^2 & 0 & 0 \\
0 & 0 & -\frac{1}{M}\partial_x \\
0 & -\frac{1}{M}\partial_x  & 0
\end{array}\right), \quad 
\Psi_0=\left(\begin{array}{c}
\psi_0 \\
\rho_0-\alpha_1|\psi_0|^2 \\
M\partial_x\phi_0 -\alpha_2|\psi_0|^2
\end{array}\right),
$$
\begin{equation}\label{definición de Psi, A, B(Psi)}
    B(\Psi)=\left(\begin{array}{c}
-i\epsilon\left\{\left(\sigma_2+W\left(\alpha_1+\frac{D}{M}\alpha_2\right)\right)|\psi|^2+W\left(U+\frac{D}{M}V\right)\right\} \psi \\
0 \\
0
\end{array}\right).
\end{equation}

Since $s>1/2$, $H^s(\mathbb{T})$ is a Banach algebra and there is a constant $C_s>0$, depending only on $s$, such that $\|fg\|_s\le C_s\|f\|_s\|g\|_s$ for all $f,g\in H^s(\mathbb{T})$. Using this, we can see that $\Psi_0 \in Y$, where $Y$ denotes the space $\left(H^s(\mathbb{T})\right)^3$:
\begin{equation}\label{Psi0 in Y}
\begin{split}
    \norm{\Psi_0}_Y&=\norm{\psi_0}_s+\norm{\rho_0-\alpha_1|\psi_0|^2}_s+\norm{M\partial_x\phi_0 -\alpha_2|\psi_0|^2}_s\\
    & \leq \norm{\psi_0}_s+\norm{\rho_0}_s+\abs{\alpha_1}\norm{|\psi_0|^2}_s+M\norm{\partial_x\phi_0}_s+\abs{\alpha_2}\norm{|\psi_0|^2}_s\\
    & \leq \norm{\psi_0}_s+\norm{\rho_0}_s+\abs{\alpha_1}C_s\norm{\psi_0}^2_s+M\norm{\phi_0}_{s+1}+\abs{\alpha_2}C_s\norm{\psi_0}^2_s< \infty.
\end{split}
\end{equation}

\subsubsection{Linear Equation}
Consider the linear part of the system \eqref{sistema nuevo}:
\begin{equation}\label{sistema nuevo lineal}
    \left\{\begin{array}{l}
\partial_t\Psi=A\Psi , \\
\Psi(t=0)=\Psi_0 \in Y.
\end{array}\right.
\end{equation}

We proceed exactly as with the system \eqref{sun-jin}. Taking the Fourier transform gives
$$\widehat{A\varphi}(k)=\mathcal{A}(k)\widehat{\varphi}(k),$$
for every $k \in \mathbb{Z}$ and every $\varphi=(\varphi_1,\varphi_2,\varphi_3) \in \mathcal{P}^{\prime}\times\mathcal{P}^{\prime}\times\mathcal{P}^{\prime}$, where
$$\mathcal{A}(k)=\left(\begin{array}{ccc}
\sigma_3ik-i \epsilon\delta k^2 & 0 & 0 \\
0 & 0 & -i\frac{k}{M} \\
0 & -i\frac{k}{M}  & 0
\end{array}\right).$$

Setting $\tilde{Y}:=H^{s-2}(\mathbb{T})\times H^{s-1}(\mathbb{T})\times H^{s-1}(\mathbb{T})$, the same type of estimate used in \eqref{A} above gives

\begin{equation}
        \norm{A\varphi}_{\tilde{Y}} \leq \paren{1+\epsilon|\delta|}\norm{\varphi_1}_{s} + \frac{1}{M}\norm{\varphi_2}_{s} + \frac{1}{M}\norm{\varphi_3}_{s}\leq C_{\epsilon, \delta,M}\norm{\varphi}_{Y},
\end{equation}
where $C_{\epsilon, \delta,M}:=\max\{1+\epsilon|\delta|, \frac{1}{M}\}$. Since $A$ is linear, this shows $A \in \mathcal{B}\left(Y, \tilde{Y}\right)$.
\\

Taking the Fourier transform of \eqref{sistema nuevo lineal} gives, for each $k\in \mathbb{Z}$,
\begin{equation}\label{sistema nuevo lineal TF}
    \left\{\begin{array}{l}
\partial_t\widehat{\Psi(t)}(k)=\mathcal{A}(k)\widehat{\Psi(t)}(k), \quad t\geq 0, \\
\widehat{\Psi(0)}(k)=\widehat{\Psi_0}(k), 
\end{array}\right.
\end{equation}

a decoupled system of ODEs. Diagonalizing $\mathcal{A}(k)$,
$$e^{\mathcal{A}(k) t}=\left(\begin{array}{ccc}e^{it(\sigma_3k-\epsilon\delta k^2)} & 0 & 0\\ 0 & \cos \left(\frac{k}{M} t\right) & -i \sin \left(\frac{k}{M} t\right) \\ 0 & -i\sin \left(\frac{k}{M} t\right) & \cos \left(\frac{k}{M} t\right)\end{array}\right).$$  

Thus the unique solution of \eqref{sistema nuevo lineal TF} is $\widehat{\Psi(t)}(k)=e^{\mathcal{A}(k) t} \widehat{\Psi_0}(k)$, and by the inverse Fourier transform the candidate solution of \eqref{sistema nuevo lineal} is
\begin{equation}\label{solución de sistema nuevo lineal}
    \Psi(t)=\sum_{k=-\infty}^{\infty} e^{ikx}e^{\mathcal{A}(k) t} \widehat{\Psi_0}(k).
\end{equation}

Defining $S(t)\varphi=\left(\left( e^{\mathcal{A}(k) t} \widehat{\varphi}(k)\right)_{k\in \mathbb{Z}}\right)^\lor$ for $\varphi=(\varphi_1,\varphi_2,\varphi_3) \in \paren{\mathcal{P}^{\prime}}^3$, we have:

\begin{teorema}
    $\{S(t)\}_{t\geq 0}$ is a $C_0$ semigroup on $H^{q}(\mathbb{T})\times H^{s}(\mathbb{T})\times H^{s}(\mathbb{T})$, for every $q, s \in \mathbb{R}$.
\end{teorema}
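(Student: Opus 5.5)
We follow the scheme of the earlier theorem for the first semigroup, with a simplification. Here every entry of $e^{\mathcal{A}(k)t}$ has modulus at most one, so the symbol is uniformly bounded in $k$ and no weights are lost. Fix $q,s\in\mathbb{R}$ and set $Z:=H^{q}(\mathbb{T})\times H^{s}(\mathbb{T})\times H^{s}(\mathbb{T})$. Linearity of $S(t)$ is immediate from the definition. For boundedness we compute, for $\varphi=(\varphi_1,\varphi_2,\varphi_3)\in Z$,
\begin{equation*}
\widehat{S(t)\varphi}(k)=\Big(e^{it(\sigma_3k-\epsilon\delta k^2)}\widehat{\varphi_1}(k),\ \cos\big(\tfrac{k}{M}t\big)\widehat{\varphi_2}(k)-i\sin\big(\tfrac{k}{M}t\big)\widehat{\varphi_3}(k),\ -i\sin\big(\tfrac{k}{M}t\big)\widehat{\varphi_2}(k)+\cos\big(\tfrac{k}{M}t\big)\widehat{\varphi_3}(k)\Big).
\end{equation*}
Using $|e^{i\theta}|=1$ and $|\cos|,|\sin|\le 1$, the first component satisfies $\|\cdot\|_q=\|\varphi_1\|_q$. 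The last two components are each bounded by $\|\varphi_2\|_s+\|\varphi_3\|_s$. Hence $\|S(t)\varphi\|_Z\le 2\|\varphi\|_Z$ uniformly in $t\ge0$. This is sharper than before: the $1/(Mk)$ factor, and the limit analysis at $k=0$, no longer appear.

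The algebraic semigroup properties come next. $S(0)=I$ because $e^{\mathcal{A}(k)\cdot 0}$ is the identity matrix. Also $S(t+r)=S(t)S(r)$ because $e^{\mathcal{A}(k)(t+r)}=e^{\mathcal{A}(k)t}e^{\mathcal{A}(k)r}$ for each fixed $k$, and the Fourier multipliers compose exactly as in the earlier proof. These identities can also be checked directly from the addition formulas for $\cos$ and $\sin$ applied to the $2\times 2$ block $\begin{pmatrix}\cos & -i\sin\\ -i\sin&\cos\end{pmatrix}$.

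Strong continuity at $t=0^+$ is the only analytic step. We write $\|S(t)\varphi-\varphi\|_Z$ as the sum of three weighted series, as in \eqref{series}. The summands are dominated uniformly in $t$ by $4(1+k^2)^q|\widehat{\varphi_1}(k)|^2$ for the first series. For the other two they are dominated by $2\big(4|\widehat{\varphi_j}(k)|^2+|\widehat{\varphi_l}(k)|^2\big)(1+k^2)^s$, where $\{j,l\}=\{2,3\}$, using $(a+b)^2\le 2(a^2+b^2)$. These majorants are summable since $\varphi\in Z$. By the Weierstrass $M$-test (equivalently, dominated convergence for series), we may pass to the limit termwise. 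Each summand tends to $0$ as $t\to0^+$ because $e^{i0}=1$, $\cos 0=1$ and $\sin 0=0$, so $\|S(t)\varphi-\varphi\|_Z\to0$.

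We do not expect a real obstacle. The one point needing care is that the dominating bounds must be independent of $t$, which is what justifies the interchange of limit and sum. This holds because the multipliers are unimodular or trigonometric and carry no $k$-growth. It is precisely here that the change of variables $Q=M\partial_x\phi$ pays off compared with the earlier theorem, where the $Mk\sin$ and $\frac{1}{Mk}\sin$ entries forced the one-derivative shift between the second and third components.
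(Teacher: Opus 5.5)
Your proof is correct and follows the same route as the paper. You compute the Fourier multiplier, get $\|S(t)\varphi\|\le 2\|\varphi\|$ because every entry has modulus at most one, obtain the semigroup identities from $e^{\mathcal{A}(k)(t+r)}=e^{\mathcal{A}(k)t}e^{\mathcal{A}(k)r}$, and prove strong continuity by passing to the limit termwise under $t$-independent summable majorants. You also write out the strong-continuity step in full, which the paper only dismisses as ``similar to the previous case''.
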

\begin{proof}
    Let $q, s \in \mathbb{R}$, $\mathcal{H}:=H^{q}(\mathbb{T})\times H^{s}(\mathbb{T})\times H^{s}(\mathbb{T})$, and let $\varphi=(\varphi_1,\varphi_2,\varphi_3)\in \mathcal{H}$. We have
$$\widehat{S(t)\varphi}(k)=\left(\begin{array}{c}
e^{it(\sigma_3 k -\epsilon\delta k^2) } \widehat{\varphi_1}(k) \\
\cos \left(\frac{k}{M} t\right) \widehat{\varphi_2}(k)-i \sin \left(\frac{k}{M} t\right) \widehat{\varphi_3}(k) \\
-i\sin \left(\frac{k}{M} t\right) \widehat{\varphi_2}(k)+ \cos \left(\frac{k}{M} t\right) \widehat{\varphi_3}(k)\end{array}\right), \quad \forall k\in \mathbb{Z}.$$

Since every entry of the matrix above has modulus at most $1$, we get directly
\begin{equation}
    \norm{S(t)\varphi}_{\mathcal{H}}\leq \norm{\psi}_q+2\norm{\varphi_2}_s+2\norm{\varphi_3}_s\leq 2\norm{\varphi}_{\mathcal{H}},
\end{equation}
so $S(t)\in \mathcal{B}(\mathcal{H})$. As before, $S(0)=I_{\mathcal{H}}$ and $S(t+s)=S(t)S(s)$, so $\{S(t)\}_{t\geq 0}$ is a semigroup. The proof that the semigroup is of class $C_0$ is similar to the previous case.
\end{proof}

\begin{corolario}\label{$S(t)$ es un semigrupo de clase $C_0$ en $Y$}
    $S(t)$ is a $C_0$ semigroup on $Y$.
\end{corolario}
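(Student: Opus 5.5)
This is the special case $q=s$ of the preceding theorem, since $Y=(H^s(\mathbb{T}))^3=H^{s}(\mathbb{T})\times H^{s}(\mathbb{T})\times H^{s}(\mathbb{T})$. I would simply invoke that theorem with $q=s$. For completeness, I would also record the parts of the argument that the preceding proof dismissed as ``similar'', specialized to $Y$.

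\emph{Boundedness and semigroup property.} For each $t\ge0$ the Fourier multiplier $e^{\mathcal{A}(k)t}$ has entries of modulus at most $1$. Hence $\norm{S(t)\varphi}_Y\le 2\norm{\varphi}_Y$, so $S(t)\in\mathcal{B}(Y)$. The identities $S(0)=I_Y$ and $S(t+\tau)=S(t)S(\tau)$ follow from $e^{\mathcal{A}(k)(t+\tau)}=e^{\mathcal{A}(k)t}e^{\mathcal{A}(k)\tau}$ for each fixed $k$, exactly as in the first semigroup theorem.

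\emph{Strong continuity at $t=0^+$.} This is the only step needing an argument, and it is the step the previous proof left implicit. I would write
\[
\norm{S(t)\varphi-\varphi}_Y^2\lesssim\sum_{k\in\mathbb{Z}}(1+k^2)^s\left|\left(e^{\mathcal{A}(k)t}-I\right)\widehat{\varphi}(k)\right|^2 .
\]
The matrix $e^{\mathcal{A}(k)t}-I$ has entries bounded by $2$, so each summand is dominated by $C(1+k^2)^s\left(|\widehat{\varphi_1}(k)|^2+|\widehat{\varphi_2}(k)|^2+|\widehat{\varphi_3}(k)|^2\right)$. This bound is summable because $\varphi\in Y$, and it is independent of $t$. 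By the Weierstrass $M$-test the series converges uniformly in $t\ge0$, so the limit $t\to0^+$ may be taken termwise.

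Each term tends to $0$, since $e^{it(\sigma_3k-\epsilon\delta k^2)}\to1$, $\cos(kt/M)\to1$ and $\sin(kt/M)\to0$. Therefore $\lim_{t\to0^+}\norm{S(t)\varphi-\varphi}_Y=0$, and $\{S(t)\}_{t\ge0}$ is a $C_0$ semigroup on $Y$.

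There is no genuine obstacle. The only point requiring care is that, unlike the first linear system, no $1/(Mk)$ factor appears. So the $k=0$ mode causes no trouble, and all bounds are uniform in $t$, not just locally in $t$.
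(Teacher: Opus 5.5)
Your proposal is correct and follows the paper's route. The corollary is just the case $q=s$ of the preceding theorem. Your filled-in strong-continuity step (entrywise bounds on $e^{\mathcal{A}(k)t}-I$, the Weierstrass $M$-test, then a termwise limit $t\to0^+$) is the same argument the paper gives for the first semigroup and refers to as ``similar'' here.
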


We now show that $S(t)\Psi_0$ is indeed the solution of \eqref{sistema nuevo lineal}.

\begin{teorema}\label{S(t)Psi_0 es la solución del sistema nuevo lineal}
   Let $\Psi_0 \in Y$. The function $\Psi(t)=S(t)\Psi_0$, $t\geq 0$, is the unique function satisfying
   \begin{equation}
       \Psi(t) \in C\paren{[0,\infty);Y}, \quad \lim _{h \rightarrow 0}\left\|\frac{\Psi(t+h)-\Psi(t)}{h}-A \Psi(t)\right\|_{\tilde{Y}}=0, \quad \Psi(0)=\Psi_0,
   \end{equation}
the convergence being uniform in $t$. 
\end{teorema}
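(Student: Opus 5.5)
We follow the proof of Theorem \ref{solución de 3.2} step by step, replacing the old symbol $\mathcal{A}(k)$ by the new one. The new symbol is simpler: the $(U,V)$ block of $e^{\mathcal{A}(k)t}$ has entries $\cos(\frac{k}{M}t)$ and $-i\sin(\frac{k}{M}t)$, all of modulus at most $1$. So there is no factor $Mk$ or $\frac{1}{Mk}$ to control, and no special treatment of $k=0$ is needed.

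\textbf{Continuity in time.} By Corollary \ref{$S(t)$ es un semigrupo de clase $C_0$ en $Y$}, $\{S(t)\}$ is a $C_0$ semigroup on $Y$. Hence $t\mapsto S(t)\Psi_0$ is continuous into $Y$, and $\Psi(0)=\Psi_0$.

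\textbf{Right derivative.} For $h>0$ we write $\widehat{\Psi(t+h)}(k)=e^{\mathcal{A}(k)t}e^{\mathcal{A}(k)h}\widehat{\Psi_0}(k)$. Then the Fourier coefficient of $\frac{\Psi(t+h)-\Psi(t)}{h}-A\Psi(t)$ equals
\[
e^{\mathcal{A}(k)t}\Big(\tfrac{e^{\mathcal{A}(k)h}-I}{h}-\mathcal{A}(k)\Big)\widehat{\Psi_0}(k).
\]
Two facts bound this uniformly in $t$:
\begin{itemize}
\item The matrix norm of $e^{\mathcal{A}(k)t}$ is at most $2$, uniformly in $t$. (In fact the $(U,V)$ block is unitary.)
\item By the Mean Value Theorem, applied entrywise as in \eqref{3.30} and \eqref{3.31}, the entries of $\frac{e^{\mathcal{A}(k)h}-I}{h}$ are bounded by $|k|+\epsilon|\delta|k^2$ in the $\psi$ component and by $\frac{|k|}{M}$ in the $U,V$ components.
\end{itemize}
The weights of $\tilde Y$ are $(1+k^2)^{s-2}$ for the first component and $(1+k^2)^{s-1}$ for the last two. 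They absorb exactly $k^4$ and $k^2$ respectively. Hence each summand of the $\tilde Y$-norm squared is dominated by a constant $C(\epsilon,\delta,M)$ times $(1+k^2)^s|\widehat{\Psi_0}(k)|^2$, which is summable because $\Psi_0\in Y$. This bound does not depend on $t$ or $h$.

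Each summand tends to $0$ as $h\to0^+$. By the Weierstrass $M$-test, i.e. dominated convergence for series, the $\tilde Y$-norm therefore tends to $0$. The domination is independent of $t$, so we also have $\sup_k\|e^{\mathcal{A}(k)t}\|\le 2$. The uniformity in $t$ then follows by factoring $e^{\mathcal{A}(k)t}$ out. The remaining quantity $\|\frac{S(h)\Psi_0-\Psi_0}{h}-A\Psi_0\|_{\tilde Y}$, up to the factor $2$, no longer depends on $t$.

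\textbf{Left derivative.} For $-t<h<0$ we use $S(t)=S(t+h)S(-h)$, exactly as in the earlier proof. We also use the bound $\|S(\tau)\|_{\mathcal{B}(\tilde Y)}\le 2$, valid for all $\tau$ since $\tilde Y$ has the same structure as $\mathcal{H}$ in the preceding theorem. Together with $A\in\mathcal{B}(Y,\tilde Y)$ and strong continuity, this gives the estimate
\[
\Big\|\tfrac{\Psi(t+h)-\Psi(t)}{h}-A\Psi(t)\Big\|_{\tilde Y}\le 2\Big\|\tfrac{S(-h)\Psi_0-\Psi_0}{-h}-A\Psi_0\Big\|_{\tilde Y}+2C_{\epsilon,\delta,M}\|S(-h)\Psi_0-\Psi_0\|_Y ,
\]
whose right-hand side tends to $0$ independently of $t$.

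\textbf{Uniqueness.} Let $u$ be any such solution. Strong convergence of the difference quotient in $\tilde Y$ implies convergence of each Fourier coefficient, obtained by pairing against $e^{-ik(\cdot)}$. Hence $\widehat{u(t)}(k)$ is differentiable and solves $\partial_t\widehat{u(t)}(k)=\mathcal{A}(k)\widehat{u(t)}(k)$ with $\widehat{u(0)}(k)=\widehat{\Psi_0}(k)$. By uniqueness for linear ODEs with constant coefficients, $\widehat{u(t)}(k)=e^{\mathcal{A}(k)t}\widehat{\Psi_0}(k)$ for every $k$, so $u=S(t)\Psi_0$.

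The only point requiring care is the uniformity in $t$, which comes down to the uniform bound on $\|e^{\mathcal{A}(k)t}\|$. Here that bound is immediate, unlike the earlier case where $K_{M,t}$ grew with $t$.
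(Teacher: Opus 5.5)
Your proposal is correct and follows the paper's proof essentially step by step: Fourier-side Mean Value Theorem bounds with the Weierstrass $M$-test for $h\to0^+$, the factorization $S(t)=S(t+h)S(-h)$ with the operator bound $2$ for $h\to0^-$, and uniqueness via the decoupled ODEs for the Fourier coefficients. Factoring $e^{\mathcal{A}(k)t}$ out of the right difference quotient and using $\|S(t)\|_{\mathcal{B}(\tilde Y)}\le 2$ is a slightly cleaner way to make explicit the uniformity in $t$, which the paper's componentwise estimates leave implicit.
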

 \begin{proof}
     By the previous corollary, $\Psi(t) \in C\paren{[0,\infty);Y}$. 
\\

Let $t\geq0$, $h>0$, and again write $f^1_{k,t}=\cos \left(\frac{k}{M} t\right)$, $f^2_{k,t}=\sin \left(\frac{k}{M} t\right)$. As in the proof of Theorem \ref{solución de 3.2},
$$\begin{aligned}
    \left({\frac{\Psi(t+h)-\Psi(t)}{h}}\right)^{\land}(k)&=\frac{1}{h
    }\left(\begin{array}{c}
e^{i(t+h)(\sigma_3 k -\epsilon\delta k^2) } \widehat{\psi_0}(k)-e^{it(\sigma_3 k -\epsilon\delta k^2) } \widehat{\psi_0}(k) \\
f^1_{k,t+h} \widehat{U_0}(k)-f^1_{k,t} \widehat{U_0}(k)-i f^2_{k,t+h} \widehat{V_0}(k)+i f^2_{k,t} \widehat{V_0}(k) \\
-if^2_{k,t+h} \widehat{U_0}(k)+if^2_{k,t} \widehat{U_0}(k)+ f^1_{k,t+h} \widehat{V_0}(k)-f^1_{k,t} \widehat{V_0}(k)
\end{array}\right),
\end{aligned}
$$

and
 $$\begin{aligned}
    \widehat{A\Psi(t)}(k) & = \left(\begin{array}{c}
(i\sigma_3k-i \epsilon\delta k^2)e^{it(\sigma_3 k -\epsilon\delta k^2) } \widehat{\psi_0}(k)  \\
-\frac{k}{M}f^2_{k,t} \widehat{U_0}(k)-i\frac{k}{M}f^1_{k,t} \widehat{V_0}(k)
\\
 -i\frac{k}{M}f^1_{k,t} \widehat{U_0}(k)- \frac{k}{M} f^2_{k,t} \widehat{V_0}(k)
\end{array}\right).
 \end{aligned}$$
 
Subtracting entry by entry and taking the $\tilde Y$-norm gives
$$\left\|\frac{\Psi(t+h)-\Psi(t)}{h}-A \Psi(t)\right\|_{\tilde{Y}}$$
    $$=\left(2\pi\sum_{k=-\infty}^{\infty}(1+k^2)^{s-2}\left|\frac{e^{i(t+h)(\sigma_3 k -\epsilon\delta k^2) }-e^{it(\sigma_3 k -\epsilon\delta k^2) }}{h}-(i\sigma_3 k -i\epsilon\delta k^2)e^{it(\sigma_3 k -\epsilon\delta k^2) }\right|^2|\widehat{\psi_0}(k)|^2\right)^{1/2}$$    
$$+\left(2\pi\sum_{k=-\infty}^{\infty}(1+k^2)^{s-1}\left|\left(\frac{f^1_{k,t+h}-f^1_{k,t}}{h}+\frac{k}{M}f^2_{k,t}\right)\widehat{U_0}(k)-i\left(\frac{f^2_{k,t+h}-f^2_{k,t}}{h}-\frac{k}{M}f^1_{k,t}\right)\widehat{V_0}(k)\right|^2\right)^{1/2}$$
\begin{equation}\label{tres series 2}
    +\left(2\pi\sum_{k=-\infty}^{\infty}(1+k^2)^{s-1}\left|-i\left(\frac{f^2_{k,t+h}-f^2_{k,t}}{h}-\frac{k}{M}f^1_{k,t}\right)\widehat{U_0}(k)+\left(\frac{f^1_{k,t+h}-f^1_{k,t}}{h}+\frac{k}{M}f^2_{k,t}\right)\widehat{V_0}(k)\right|^2\right)^{1/2}.
\end{equation}

The first series is bounded exactly as in \eqref{3.30}, using the Mean Value Theorem for $e^{ix(\sigma_3k-\epsilon\delta k^2)}$: there is $a\in(0,h)$ with $\left|\sigma_3 k-\epsilon\delta k^2\right|\le 2C_{\epsilon,\delta}k^2$, $C_{\epsilon,\delta}:=\max\{1,\epsilon|\delta|\}$, giving

$$2\pi\sum_{k=-\infty}^{\infty}(1+k^2)^{s-2}\left|\frac{e^{i(t+h)(\sigma_3 k -\epsilon\delta k^2) }-e^{it(\sigma_3 k -\epsilon\delta k^2) }}{h}-(i\sigma_3 k -i\epsilon\delta k^2)e^{it(\sigma_3 k -\epsilon\delta k^2) }\right|^2|\widehat{\psi_0}(k)|^2$$
    \begin{equation}\label{3.61}
        \leq32\pi C_{\epsilon,\delta}^2\sum_{k=-\infty}^{\infty}(1+k^2)^{s}|\widehat{\psi_0}(k)|^2<\infty.
    \end{equation}

For the remaining two series, the sum formulas $f^1_{k,t+h}=f^1_{k,t}f^1_{k,h}-f^2_{k,t}f^2_{k,h}$ and $f^2_{k,t+h}=f^2_{k,t}f^1_{k,h}+f^2_{k,h}f^1_{k,t}$ together with the Mean Value Theorem bounds $\left|\frac{f^1_{k,h}-1}{h}\right|,\left|\frac{f^2_{k,h}}{h}\right|\le\left|\frac{k}{M}\right|$ give (similarly as before),
\begin{equation}\label{3.62}
    2\pi\sum_{k=-\infty}^{\infty}(1+k^2)^{s-1}\left|\left(\frac{f^1_{k,t+h}-f^1_{k,t}}{h}+\frac{k}{M}f^2_{k,t}\right)\widehat{U_0}(k)\right|^2\leq\frac{18\pi}{M^2}\sum_{k=-\infty}^{\infty}(1+k^2)^{s}|\widehat{U_0}(k)|^2<\infty,
\end{equation}
\begin{equation}\label{3.63}
    2\pi\sum_{k=-\infty}^{\infty}(1+k^2)^{s-1}\left|-i\left(\frac{f^2_{k,t+h}-f^2_{k,t}}{h}-\frac{k}{M}f^1_{k,t}\right)\widehat{V_0}(k)\right|^2\leq\frac{18\pi}{M^2}\sum_{k=-\infty}^{\infty}(1+k^2)^{s}|\widehat{V_0}(k)|^2<\infty,
\end{equation}
\begin{equation}\label{3.64}
     2\pi\sum_{k=-\infty}^{\infty}(1+k^2)^{s-1}\left|-i\left(\frac{f^2_{k,t+h}-f^2_{k,t}}{h}-\frac{k}{M}f^1_{k,t}\right)\widehat{U_0}(k)\right|^2\leq\frac{18\pi}{M^2}\sum_{k=-\infty}^{\infty}(1+k^2)^{s}|\widehat{U_0}(k)|^2<\infty,  
 \end{equation}
 and
 \begin{equation}\label{3.65}
      2\pi\sum_{k=-\infty}^{\infty}(1+k^2)^{s-1}\left|\left(\frac{f^1_{k,t+h}-f^1_{k,t}}{h}+\frac{k}{M}f^2_{k,t}\right)\widehat{V_0}(k)\right|^2\leq\frac{18\pi}{M^2}\sum_{k=-\infty}^{\infty}(1+k^2)^{s}|\widehat{V_0}(k)|^2<\infty.
 \end{equation}

By \eqref{3.61}, \eqref{3.62}, \eqref{3.63}, \eqref{3.64}, \eqref{3.65}, and the Weierstrass $M$-test, the three series in \eqref{tres series 2} converge uniformly in $h$, so as before we may take the limit $h\to0^+$ term by term; each term cancels exactly, giving
$$\lim _{h \rightarrow 0^+}\left\|\frac{\Psi(t+h)-\Psi(t)}{h}-A \Psi(t)\right\|_{\tilde{Y}}=0.$$

For $0<-h<t$, the same argument used above for \eqref{sun-jin} applies replacing $K_{M,t}$ by $2$ and $C_{\delta,M}$ by $C_{\epsilon,\delta,M}$:
\begin{equation}
\begin{aligned}
         \left\|\frac{\Psi(t+h)-\Psi(t)}{h}-A \Psi(t)\right\|_{\tilde{Y}}  &\leq2\left\|\frac{S(-h)\Psi_0-\Psi_0}{-h}-A\Psi_0\right\|_{\tilde{Y}}+2C_{\epsilon,\delta,M}\left\|\Psi_0- S(-h)\Psi_0\right\|_{Y},
\end{aligned}
\end{equation}
and both terms vanish as $h\to0^-$. Hence $\partial_t\Psi(t)=A\Psi (t)$ in $\tilde{Y}$ for all $t\geq0$, and $\Psi (0)=\Psi_0$, so $\Psi(t)$ is a solution of \eqref{sistema nuevo lineal}; uniqueness follows exactly as in the proof of Theorem \ref{solución de 3.2}.
 \end{proof}
 \medskip
 \medskip
From what was studied above we can deduce the following:
 \begin{corolario}\label{Psi es C1}
$\Psi(t) \in C^1\paren{[0,\infty);\tilde{Y}}$ and the solutions of \eqref{sistema nuevo lineal} depend continuously on the initial data.
 \end{corolario}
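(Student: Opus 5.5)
We follow the proof of Theorem \ref{Teorema 3.1} essentially verbatim, with $X,\tilde X$ replaced by $Y,\tilde Y$, $K_{M,t}$ by the constant $2$ from the semigroup bound on $\mathcal{H}$ (with $q=s$), and $C_{\delta,M}$ by $C_{\epsilon,\delta,M}$. Theorem \ref{S(t)Psi_0 es la solución del sistema nuevo lineal} already gives existence, uniqueness and the identity $\partial_t\Psi(t)=A\Psi(t)=AS(t)\Psi_0$ in $\tilde Y$ for every $t\ge0$. Two things remain: continuity of $t\mapsto\partial_t\Psi(t)$ in $\tilde Y$, and continuous dependence on the data.

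\emph{Step 1 ($C^1$ regularity).} Fix $t\ge0$ and $h$ with $t+h\ge0$. Since $A\in\mathcal{B}(Y,\tilde Y)$ with norm at most $C_{\epsilon,\delta,M}$,
\begin{equation*}
\norm{\partial_t\Psi(t+h)-\partial_t\Psi(t)}_{\tilde Y}=\norm{A\paren{S(t+h)\Psi_0-S(t)\Psi_0}}_{\tilde Y}\le C_{\epsilon,\delta,M}\norm{S(t+h)\Psi_0-S(t)\Psi_0}_{Y}.
\end{equation*}
By Corollary \ref{$S(t)$ es un semigrupo de clase $C_0$ en $Y$}, the map $t\mapsto S(t)\Psi_0$ is continuous into $Y$, so the right-hand side tends to $0$ as $h\to0$. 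Hence $\Psi\in C^1([0,\infty);\tilde Y)$. At $t=0$ only one-sided limits occur, which is harmless.

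\emph{Step 2 (continuous dependence).} Let $\Psi_{0,1},\Psi_{0,2}\in Y$ with solutions $\Psi_j(t)=S(t)\Psi_{0,j}$, which are the solutions by uniqueness. By linearity and the uniform bound $\norm{S(t)}_{\mathcal{B}(Y)}\le2$ from the preceding theorem,
\begin{equation*}
\sup_{t\ge0}\norm{\Psi_1(t)-\Psi_2(t)}_Y\le2\norm{\Psi_{0,1}-\Psi_{0,2}}_Y.
\end{equation*}
Combining this with Step 1 also gives, for the derivatives,
\begin{equation*}
\sup_{t\ge0}\norm{\partial_t\Psi_1-\partial_t\Psi_2}_{\tilde Y}\le2C_{\epsilon,\delta,M}\norm{\Psi_{0,1}-\Psi_{0,2}}_Y.
\end{equation*}
So the data-to-solution map is Lipschitz into $C([0,\infty);Y)\cap C^1([0,\infty);\tilde Y)$.

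There is no real obstacle here. The only point to check is that the bound in the semigroup theorem is uniform in $t$: every entry of $e^{\mathcal{A}(k)t}$ has modulus at most $1$. This gives a global-in-time Lipschitz constant, unlike the $t$-dependent $K_{M,t}$ in the earlier system.
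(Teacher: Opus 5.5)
Your proposal is correct and follows the paper's proof essentially line for line. $C^1$ regularity comes from $\|A(S(t+h)\Psi_0-S(t)\Psi_0)\|_{\tilde Y}\le C_{\epsilon,\delta,M}\|S(t+h)\Psi_0-S(t)\Psi_0\|_Y$ together with strong continuity of the semigroup, and continuous dependence comes from the uniform bound $\|S(t)\|_{\mathcal{B}(Y)}\le 2$; your extra Lipschitz estimate for $\partial_t\Psi$ in $\tilde Y$ is a correct, harmless strengthening of what the paper states.
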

 \begin{proof}
     Let $t\geq 0$. By the previous theorem 
\begin{equation}
\begin{aligned}
        \left\|\partial_t\Psi(t+h)-\partial_t\Psi(t)\right\|_{\tilde{Y}}&=\left\|\partial_tS(t+h)\Psi_o-\partial_tS(t)\Psi_o\right\|_{\tilde{Y}}\\
        &=\left\|A(S(t+h)\Psi_o-S(t)\Psi_o)\right\|_{\tilde{Y}}\\
        &\leq C_{\epsilon,\delta,M}\left\|S(t+h)\Psi_o-S(t)\Psi_o\right\|_{Y}.
\end{aligned}
\end{equation}
By continuity of the semigroup,
\begin{equation}
    \lim _{h \rightarrow 0}\left\|\partial_t\Psi(t+h)-\partial_t\Psi(t)\right\|_{\tilde{Y}}\leq \lim _{h \rightarrow 0}C_{\epsilon,\delta,M}\left\|S(t+h)\Psi_o-S(t)\Psi_o\right\|_{Y}=0. 
\end{equation}
Thus, $\Psi(t)$ is continuously differentiable for $t\geq 0$.
\\

Moreover, if $\Psi_{0,1},\Psi_{0,2} \in Y$, and $\Psi_1(t), \Psi_2(t) \in  C\paren{[0,\infty);Y}$ are their associated solutions of \eqref{sistema nuevo lineal},
\begin{equation}
    \sup_{t\in [0,\infty)}\norm{\Psi_1(t)-\Psi_2(t)}_Y=\sup_{t\in [0,\infty)}\norm{S(t)\Psi_{0,1}-S(t)\Psi_{0,2}}_Y\leq 2\norm{\Psi_{0,1}-\Psi_{0,2}}_Y .
\end{equation}
 \end{proof}
With Theorem \ref{S(t)Psi_0 es la solución del sistema nuevo lineal} and Corollary \ref{Psi es C1} we obtain the following result:
 \begin{teorema}\label{Teorema 3.9}
    Let $s\in \mathbb{R}$. The system \eqref{sistema nuevo lineal} is well-posed in $Y=(H^s(\mathbb{T}))^3$. That is, for every $\Psi_0\in Y$, \eqref{sistema nuevo lineal} has a unique solution 
    $$\Psi\in C([0,\infty);Y)\cap C^1([0,\infty);\tilde{Y}=H^{s-2}(\mathbb{T})\times H^{s-2}(\mathbb{T})\times H^{s-2}(\mathbb{T})),$$
    which depends continuously on the initial data.
\end{teorema}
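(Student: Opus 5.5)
The plan is to assemble the statement from the pieces already proved for \eqref{sistema nuevo lineal}. There is one bookkeeping point to settle first. Theorem \ref{S(t)Psi_0 es la solución del sistema nuevo lineal} and Corollary \ref{Psi es C1} give differentiability in $\tilde{Y}=H^{s-2}(\mathbb{T})\times H^{s-1}(\mathbb{T})\times H^{s-1}(\mathbb{T})$, while the statement uses the larger space $H^{s-2}(\mathbb{T})^3$. Since $H^{s-1}(\mathbb{T})\hookrightarrow H^{s-2}(\mathbb{T})$ with $\|f\|_{s-2}\le\|f\|_{s-1}$, the $\tilde Y$-norm dominates the $H^{s-2}(\mathbb{T})^3$-norm. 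Hence $C^1([0,\infty);\tilde Y)\subset C^1([0,\infty);H^{s-2}(\mathbb{T})^3)$, and the derivative is the same in both spaces.

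The proof then proceeds in three steps.

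\emph{Existence.} Given $\Psi_0\in Y$, set $\Psi(t)=S(t)\Psi_0$. By Corollary \ref{$S(t)$ es un semigrupo de clase $C_0$ en $Y$}, $\Psi\in C([0,\infty);Y)$ and $\Psi(0)=\Psi_0$. By Theorem \ref{S(t)Psi_0 es la solución del sistema nuevo lineal}, $\partial_t\Psi=A\Psi$ holds in $\tilde Y$. By Corollary \ref{Psi es C1}, $\Psi\in C^1([0,\infty);\tilde Y)$, which by the remark above lies in $C^1([0,\infty);H^{s-2}(\mathbb{T})^3)$.

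\emph{Uniqueness.} I would redo the Fourier argument from the proof of Theorem \ref{solución de 3.2}, and this works in any space of periodic distributions. Take a solution $u\in C([0,\infty);Y)$ whose difference quotients converge to $Au$ in $H^{s-2}(\mathbb{T})^3$. Testing against $e^{-ik(\cdot)}\in\mathcal{P}$ is continuous on $H^{s-2}(\mathbb{T})$, so each Fourier coefficient satisfies the ODE \eqref{sistema nuevo lineal TF} with initial value $\widehat{\Psi_0}(k)$. This ODE has the unique solution $e^{\mathcal{A}(k)t}\widehat{\Psi_0}(k)$, and therefore $u=S(t)\Psi_0$.

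\emph{Continuous dependence.} This follows from the bound $\|S(t)\varphi\|_Y\le 2\|\varphi\|_Y$, which gives
$$\sup_{t\ge0}\|\Psi_1(t)-\Psi_2(t)\|_Y\le 2\|\Psi_{0,1}-\Psi_{0,2}\|_Y.$$

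The only step needing real care is the mismatch between $\tilde Y$ and $H^{s-2}(\mathbb{T})^3$. The statement's space is weaker, so the uniqueness class is larger, and I must make sure the uniqueness argument uses only convergence in $H^{s-2}(\mathbb{T})^3$. It does, because it relies only on continuity of the pairing with trigonometric polynomials.
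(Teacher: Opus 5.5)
Your proposal is correct and follows the paper's approach: the paper obtains the theorem directly by combining the preceding theorem, which identifies $S(t)\Psi_0$ as the unique solution, with Corollary \ref{Psi es C1}, which gives the $C^1$ regularity and continuous dependence through $\|S(t)\|_{\mathcal{B}(Y)}\le 2$. Your extra step is also valid. The paper earlier defines $\tilde{Y}=H^{s-2}(\mathbb{T})\times H^{s-1}(\mathbb{T})\times H^{s-1}(\mathbb{T})$ but writes $H^{s-2}(\mathbb{T})^3$ in the statement, and it does not address this mismatch. You resolve it correctly via the continuous embedding, and you rightly note that the Fourier-coefficient uniqueness argument only needs convergence in $H^{s-2}(\mathbb{T})^3$.
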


\subsubsection{Local Well-Posedness in $(H^s(\mathbb{T}))^3, s>1/2$}
We now study the local well-posedness of the system \eqref{sistema nuevo}. By Duhamel's principle, we look for a solution of the integral equation
\begin{equation}\label{ecuación integral}
    \Psi(t)=S(t)\Psi_0+\int_0^tS(t-\tau)B(\Psi)(\tau)d \tau.
\end{equation}

\begin{teorema}\label{Phi es contracción}
    Let $\Psi_0 \in Y=\left(H^s(\mathbb{T})\right)^3$, $s>\frac{1}{2}$. There exist a time $T\left(s,\|\Psi_0\|_Y\right)>0$ and a function $\Psi \in C\left([0, T] ; Y\right)$ satisfying \eqref{ecuación integral}.
\end{teorema}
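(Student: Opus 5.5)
The plan is to apply the Banach Fixed Point Theorem to the map
$$\Phi(\Psi)(t):=S(t)\Psi_0+\int_0^tS(t-\tau)B(\Psi(\tau))\,d\tau$$
on the closed ball
$$\mathcal{X}_T:=\{\Psi\in C([0,T];Y):\ \sup_{t\in[0,T]}\|\Psi(t)\|_Y\le R\},\qquad R:=4\|\Psi_0\|_Y,$$
endowed with the sup norm. This ball is a complete metric space.

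The only linear input is the bound $\|S(t)\varphi\|_Y\le 2\|\varphi\|_Y$ for all $t\ge0$, proved in the theorem showing that $\{S(t)\}$ is a $C_0$ semigroup on $H^q\times H^s\times H^s$, together with the strong continuity from Corollary \ref{$S(t)$ es un semigrupo de clase $C_0$ en $Y$}. The nonlinear input is the algebra property $\|fg\|_s\le C_s\|f\|_s\|g\|_s$ for $s>1/2$, which also gives $\|\bar f\|_s=\|f\|_s$. The key point is that $B$ contains no derivatives: the derivative terms were absorbed through the choice of $L_1,L_2$ in the change of variables. Writing $\kappa:=\sigma_2+W(\alpha_1+\frac{D}{M}\alpha_2)$, we get
$$\|B(\Psi)\|_Y\le \epsilon\Bigl(|\kappa|C_s^2\|\psi\|_s^3+WC_s\bigl(\|U\|_s+\tfrac{|D|}{M}\|V\|_s\bigr)\|\psi\|_s\Bigr)\le \epsilon K\bigl(\|\Psi\|_Y^3+\|\Psi\|_Y^2\bigr).$$
Since $B$ is a sum of cubic and quadratic multilinear terms, expanding differences such as $|\psi|^2\psi-|\phi|^2\phi$ gives the Lipschitz bound
$$\|B(\Psi)-B(\Phi)\|_Y\le \epsilon K\bigl(\|\Psi\|_Y^2+\|\Phi\|_Y^2+\|\Psi\|_Y+\|\Phi\|_Y\bigr)\|\Psi-\Phi\|_Y .$$

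Next I would check that $\Phi$ maps $\mathcal{X}_T$ into $C([0,T];Y)$. For $S(t)\Psi_0$ this is exactly the $C_0$ property. For the Duhamel term, $\tau\mapsto B(\Psi(\tau))$ is continuous into $Y$ by the Lipschitz bound, and $S$ is uniformly bounded and strongly continuous. Splitting
$$\int_0^{t+h}-\int_0^t=\int_t^{t+h}S(t+h-\tau)B\,d\tau+\int_0^t\bigl(S(t+h-\tau)-S(t-\tau)\bigr)B\,d\tau,$$
the first piece is $O(h)$. The second tends to $0$ by dominated convergence, using that $\{B(\Psi(\tau))\}_{\tau\in[0,T]}$ is compact in $Y$. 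The integral itself is understood as a $Y$-valued Riemann/Bochner integral.

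With these estimates, for $\Psi,\Phi\in\mathcal{X}_T$ we obtain
$$\sup_t\|\Phi(\Psi)(t)\|_Y\le 2\|\Psi_0\|_Y+2T\epsilon K(R^3+R^2),\qquad \sup_t\|\Phi(\Psi)(t)-\Phi(\Phi)(t)\|_Y\le 4T\epsilon K(R^2+R)\sup_t\|\Psi-\Phi\|_Y .$$
Choosing $T=\bigl(8\epsilon K(R^2+R)\bigr)^{-1}$ makes $\Phi$ a self-map with Lipschitz constant $1/2$. This $T$ depends only on $s$ and $\|\Psi_0\|_Y$ (and on the fixed parameters), and it is of order $\epsilon^{-1}$ as announced. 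The fixed point is the desired solution of \eqref{ecuación integral}.

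The main obstacle is not the contraction estimate, which is routine once the algebra property is in place. It is the rigorous justification that the Duhamel integral is a well-defined element of $C([0,T];Y)$, that is, the continuity of the convolution with a merely strongly continuous semigroup. I would handle it by the splitting and compactness argument above. An alternative is to work componentwise in Fourier variables, where $S(t)$ acts as multiplication by the unimodular-entry matrices $e^{\mathcal{A}(k)t}$, and apply the Weierstrass $M$-test as in the earlier proofs.
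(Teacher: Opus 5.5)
Your proposal is correct and follows essentially the same route as the paper: a contraction argument on the closed ball of radius $4\|\Psi_0\|_Y$ in $C([0,T];Y)$, built on $\|S(t)\|_{\mathcal{B}(Y)}\le 2$, the algebra property of $H^s(\mathbb{T})$ for $s>1/2$, and the same splitting plus dominated convergence argument for continuity of the Duhamel term. The differences are cosmetic. You pick a single $T=(8\epsilon K(R^2+R))^{-1}$ that secures both the self-map and the contraction conditions, whereas the paper first fixes $T'$ and then shrinks it. Your appeal to compactness of $\{B(\Psi(\tau))\}_{\tau\in[0,T]}$ is not needed, since the uniform bound on $\|B(\Psi(\tau))\|_Y$ already supplies the dominating function.
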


\begin{proof} Let $K=K(\norm{\Psi_0}_Y)=4\norm{\Psi_0}_Y$, and consider the map
\begin{equation}
    \Phi\Psi(t)=S(t)\Psi_0+\int_0^tS(t-\tau)B(\Psi)(\tau)d \tau
\end{equation}
and the space
$$X(T,K):=\left\{\Psi \in C\left([0, T] ; Y\right): \norm{\Psi}:=\sup _{t \in[0, T]}\|\Psi(t)\|_Y \leq K\right\}.$$
$T>0$ will be chosen below. We show that for some $T>0$, $\Phi$ is a contraction on $X(T,K)$, in three steps.

\begin{itemize}
    \item \textbf{Step 1: $\Phi\Psi \in C\left([0, T] ; Y\right)$ whenever $\Psi\in X(T,K)$.} Let $T>0, \Psi\in X(T,K)$, $t \in [0,T]$. Write $C_{\sigma_2, W, D, M}:=\abs{\sigma_2+W\paren{\alpha_1+\frac{D}{M}\alpha_2}}$, $C_{W, D, M}:=W\max\{1,\frac{\abs{D}}{M}\}$. Since $H^s(\mathbb{T})$ is a Banach algebra,
    \begin{equation}\label{norma de B Psi}
        \begin{split}
            \norm{B(\Psi)(t)}_Y&\leq \epsilon \paren{C_{\sigma_2, W, D, M}\norm{|\psi(t)|^2\psi(t)}_s+W\norm{U(t)\psi(t)}_s+W\frac{\abs{D}}{M}\norm{V(t)\psi(t)}_s}\\
            & \leq \epsilon \paren{C_{\sigma_2, W, D, M}C_s^2\norm{\psi(t)}_s^3+C_{W, D, M}C_s\paren{\norm{U(t)}_s+\norm{V(t)}_s}\norm{\psi(t)}_s}\\
            & \leq \epsilon \paren{C_{\sigma_2, W, D, M}C_s^2\norm{\Psi(t)}_Y^3+C_{W, D, M}C_s\norm{\Psi(t)}_Y^2}.
        \end{split}
    \end{equation}
    Taking the supremum over $\tau\in[0,T]$ and using $\Psi\in X(T,K)$,
    \begin{equation}\label{Norma de B Psi tau}
        \sup _{\tau \in[0, T]}\|B(\Psi)(\tau)\|_Y\leq \epsilon \paren{C_{\sigma_2, W, D, M}C_s^2K^3+C_{W, D, M}C_sK^2}=:M_K.
    \end{equation}
    Then, using $\|S(t)\|_{\mathcal B(Y)}\le2$,
    \begin{equation}\label{Norma de PhiPsi}
                \norm{\Phi\Psi(t)}_Y\leq 2\norm{\Psi_0}_Y+2\int_0^t\norm{B(\Psi)(\tau)}_Yd \tau\leq 2\norm{\Psi_0}_Y+2tM_K< \infty,
    \end{equation}
    so $\Phi \Psi (t)\in Y$. For continuity in $t$, write
    \begin{equation}\label{3.75}
        \norm{\Phi\Psi(t)-\Phi\Psi(t')}_Y\leq \norm{S(t)\Psi_0-S(t')\Psi_0}_Y+\norm{\int_0^tS(t-\tau)B(\Psi)(\tau)d \tau-\int_0^{t'}S(t'-\tau)B(\Psi)(\tau)d \tau}_Y.
    \end{equation}
    The first term tends to $0$ as $t'\to t$ by continuity of the semigroup.
    \\
    
    For the second term, if $0\leq t < T$ and $t'\to t^+$, split the difference as
    $$\int_0^t\left(S(t-\tau)-S(t'-\tau)\right)B(\Psi)(\tau)d\tau+\int_t^{t'}S(t'-\tau)B(\Psi)(\tau)d\tau.$$
    The first integral tends to $0$ because $\left(S(t-\tau)-S(t'-\tau)\right)B(\Psi)(\tau)\to0$ pointwise in $\tau$ as $t'\to t^+$, is bounded (by \eqref{Norma de B Psi tau}) uniformly in $\tau$ and $t'$ by $4M_K\in L^1([0,t])$, so by the Dominated Convergence Theorem the integral of the norm tends to $0$. The second integral is bounded by $(t'-t)\cdot 2M_K\to0$. Hence $\Phi\Psi(t')\to\Phi\Psi(t)$ as $t'\to t^+$. The case $0<t\leq T$ and $t'\to t^-$ is entirely analogous, splitting the difference instead as $\int_0^{t'}(S(t-\tau)-S(t'-\tau))B(\Psi)(\tau)d\tau+\int_{t'}^tS(t-\tau)B(\Psi)(\tau)d\tau$. Thus $\Phi\Psi\in C([0,T];Y)$.

    \item \textbf{Step 2: There is $T'>0$ with $\Phi\Psi \in X(T',K)$ for every $\Psi \in X(T',K)$.} Take
    \begin{equation}
        T'= \frac{1}{4\epsilon C_sK \paren{C_{\sigma_2, W, D, M}C_sK+C_{W, D, M}}}.
    \end{equation}
Then, by \eqref{Norma de PhiPsi},
\begin{equation}
        \norm{\Phi\Psi}=\sup _{t \in[0, T']}\norm{\Phi\Psi(t)}_Y \leq 2\norm{\Psi_0}_Y+2T'M_K=\frac{K}{2}+\frac{K}{2}=K,
\end{equation}
using $\|\Psi_0\|_Y=K/4$ and the choice of $T'$.

    \item \textbf{Step 3: $\Phi$ is a contraction on $X(T,K)$ for a suitable $T\in(0,T']$.} Let 
    \begin{equation}\label{escoger T}
        T\in \left(0,T'\right]\cap\paren{0,\frac{1}{2\epsilon C_sK\paren{3C_{\sigma_2, W, D, M}C_sK+C_{W, D, M}}}}
    \end{equation}
and $\Psi_1=(\psi_1,U_1,V_1), \Psi_2=(\psi_2,U_2,V_2) \in X(T,K)$. Using the identities
$$\psi_2U_2-\psi_1U_1=\psi_2\paren{U_2-U_1}+U_1\paren{\psi_2-\psi_1}, \qquad \psi_2V_2-\psi_1V_1=\psi_2\paren{V_2-V_1}+V_1\paren{\psi_2-\psi_1},$$
$$\psi_2\abs{\psi_2}^2-\psi_1\abs{\psi_1}^2=\paren{\abs{\psi_2}^2+\abs{\psi_1}^2}\paren{\psi_2-\psi_1}+\psi_2\psi_1\paren{\overline{\psi_2}-\overline{\psi_1}},$$
and the Banach algebra estimate, we obtain
    \begin{align}
         \left\|B(\Psi_1)(t)\right.&-\left.B(\Psi_2)(t)\right\|_Y=\left\|-i\epsilon\left\{\left(\sigma_2+W\left(\alpha_1+\frac{D}{M}\alpha_2\right)\right)|\psi_1(t)|^2+W\left(U_1(t)+\frac{D}{M}V_1(t)\right)\right\} \psi_1(t)\right. \\
        &\quad + \left.i\epsilon\left\{\left(\sigma_2+W\left(\alpha_1+\frac{D}{M}\alpha_2\right)\right)|\psi_2(t)|^2+W\left(U_2(t)+\frac{D}{M}V_2(t)\right)\right\} \psi_2(t)\right\|_s\\
         &=\left\|i\epsilon\left\{\left(\sigma_2+W\left(\alpha_1+\frac{D}{M}\alpha_2\right)\right)\paren{\psi_2(t)\abs{\psi_2(t)}^2-\psi_1(t)\abs{\psi_1(t)}^2}\right.\right.\\
        &\quad\left.+W\paren{\psi_2(t)U_2(t)-\psi_1(t)U_1(t)}+W\frac{D}{M}\paren{\psi_2(t)V_2(t)-\psi_1(t)V_1(t)}\right\|_s\\
        &\leq \epsilon\left( C_{\sigma_2,W,D,M}C_s^2\paren{\norm{\psi_2(t)}_s^2+\norm{\psi_1(t)}_s^2+\norm{\psi_2(t)}_s\norm{\psi_1(t)}_s}\norm{\psi_2(t)-\psi_1(t)}_s\right.\\
        &\quad +C_{W, D, M}C_s\norm{\psi_2(t)}_s\paren{\norm{U_2(t)-U_1(t)}_s+\norm{V_2(t)-V_1(t)}_s}\\
        &\quad\left.+ C_{W, D, M}C_s \paren{\norm{U_1(t)}_s+\norm{V_1(t)}_s} \norm{\psi_2(t)-\psi_1(t)}_s\right)\\
        &\leq \epsilon C_sK\paren{3C_{\sigma_2,W,D,M}C_sK+C_{W,D,M}}\norm{\Psi_2(t)-\Psi_1(t)}_Y, \label{norma de BPsi1-BPsi2}
    \end{align}
using $\|\psi_i(t)\|_s,\|U_i(t)\|_s,\|V_i(t)\|_s\leq K$ for $\Psi_i\in X(T,K)$.
\\

Therefore
\begin{equation}
        \begin{split}
        \norm{\Phi\Psi_1-\Phi\Psi_2}&\leq \sup _{t \in[0, T]}\int_0^t\norm{S(t-\tau)\paren{B(\Psi_1)(\tau)-B(\Psi_2)(\tau)}}_Yd\tau\\
        &\leq 2T\epsilon C_sK\paren{3C_{\sigma_2,W,D,M}C_sK+C_{W,D,M}}\norm{\Psi_2-\Psi_1}_Y,
    \end{split}
\end{equation}
and by the choice of $T$ in \eqref{escoger T}, the coefficient $2T\epsilon C_sK\paren{3C_{\sigma_2,W,D,M}C_sK+C_{W,D,M}}<1$, so $\Phi$ is a contraction on $X(T,K)$.
\end{itemize}
Since $C\paren{[0, T] ; Y}$ is a Banach space and $X(T,K)$ is a closed ball in it, $X(T,K)$ is itself complete. By the Banach Fixed Point Theorem, there is a unique $\Psi \in X(T,K)$ with $\Phi\Psi=\Psi$.
\end{proof}

\begin{teorema}\label{PVI eq ec int}
    Let $s>\frac{1}{2}$. The IVP \eqref{sistema nuevo} is equivalent to the integral equation \eqref{ecuación integral}: if $\Psi \in C\left([0, T] ; Y\right)$ solves \eqref{sistema nuevo}, then $\Psi$ solves \eqref{ecuación integral}; conversely, if $\Psi \in C\left([0, T] ; Y\right)$ satisfies \eqref{ecuación integral}, then $\Psi$ solves \eqref{sistema nuevo} and $\Psi \in C^1\left([0, T] ;\tilde{Y}\right)$.
\end{teorema}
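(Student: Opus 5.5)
The plan is to prove the two implications separately. Both use the linear theory already established: by Corollary \ref{$S(t)$ es un semigrupo de clase $C_0$ en $Y$} and Theorem \ref{S(t)Psi_0 es la solución del sistema nuevo lineal}, $S(t)$ is a $C_0$ semigroup on $Y$, bounded by $2$; $A\in\mathcal B(Y,\tilde Y)$; and $\frac{d}{dt}S(t)\varphi=AS(t)\varphi$ in $\tilde Y$ for every $\varphi\in Y$. One preliminary point is needed in both directions. For $\Psi\in C([0,T];Y)$ with $s>1/2$, the Banach algebra estimate \eqref{norma de B Psi} together with the difference estimate \eqref{norma de BPsi1-BPsi2} shows that $\tau\mapsto B(\Psi)(\tau)$ is continuous into $Y$, hence into $\tilde Y$. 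All the integrals below are then Riemann/Bochner integrals of continuous functions.

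\textbf{Direct implication.} Let $\Psi\in C([0,T];Y)$ solve \eqref{sistema nuevo}, with $\partial_t\Psi=A\Psi+B(\Psi)$ holding in $\tilde Y$. Fix $t$ and set $G(\tau)=S(t-\tau)\Psi(\tau)$ for $\tau\in[0,t]$. I will differentiate $G$ in $\tilde Y$ by splitting the difference quotient:
\[
\frac{G(\tau+h)-G(\tau)}{h}=S(t-\tau-h)\frac{\Psi(\tau+h)-\Psi(\tau)}{h}+\frac{S(t-\tau-h)-S(t-\tau)}{h}\Psi(\tau).
\]
The first term tends to $S(t-\tau)(A\Psi+B(\Psi))(\tau)$. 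For this I need that $S$ is uniformly bounded on $\tilde Y$, which holds because $\tilde Y$ is again a product of Sobolev spaces on which the same Fourier multipliers have modulus at most $1$; I also need strong continuity of $S$ on $\tilde Y$. The second term tends to $-AS(t-\tau)\Psi(\tau)$. This follows from Theorem \ref{S(t)Psi_0 es la solución del sistema nuevo lineal}, whose convergence is uniform in $t$. Since $A$ commutes with $S$ (both are Fourier multipliers), the $A$-terms cancel, and we get $G'(\tau)=S(t-\tau)B(\Psi)(\tau)$, which is continuous in $\tilde Y$. Integrating from $0$ to $t$ (fundamental theorem of calculus for $\tilde Y$-valued $C^1$ functions) gives $\Psi(t)-S(t)\Psi_0=\int_0^tS(t-\tau)B(\Psi)(\tau)\,d\tau$ in $\tilde Y$. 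Both sides lie in $Y$, so the identity holds in $Y$, which is \eqref{ecuación integral}.

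\textbf{Converse.} Let $\Psi$ satisfy \eqref{ecuación integral}. The term $S(t)\Psi_0$ is $C^1$ into $\tilde Y$ with derivative $AS(t)\Psi_0$, by Corollary \ref{Psi es C1}. For $F(t)=\int_0^tS(t-\tau)f(\tau)\,d\tau$ with $f=B(\Psi)\in C([0,T];Y)$, I will show that
\[
\frac{F(t+h)-F(t)}{h}=\int_0^t\frac{S(t+h-\tau)-S(t-\tau)}{h}f(\tau)\,d\tau+\frac1h\int_t^{t+h}S(t+h-\tau)f(\tau)\,d\tau .
\]
As $h\to0$, the first integral tends to $\int_0^tAS(t-\tau)f(\tau)\,d\tau=AF(t)$ and the second tends to $f(t)$, with both limits taken in $\tilde Y$. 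This gives $\partial_tF=AF+f$ in $\tilde Y$, so $\partial_t\Psi=A\Psi+B(\Psi)$ and $\Psi(0)=\Psi_0$. For negative $h$ the same splitting works, as in the earlier proofs. Finally, $A\Psi+B(\Psi)\in C([0,T];\tilde Y)$ because $A\in\mathcal B(Y,\tilde Y)$ and $B(\Psi)$ is continuous into $Y$. Hence $\Psi\in C^1([0,T];\tilde Y)$.

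\textbf{Main obstacle.} The delicate step is passing to the limit inside the integral in the converse direction. The key input is that the convergence $\frac{S(h)\varphi-\varphi}{h}\to A\varphi$ in $\tilde Y$ is uniform on compact subsets of $Y$. I will get this from the Fourier bounds \eqref{3.61}--\eqref{3.65}: each summand is dominated by $C(1+k^2)^s|\widehat\varphi(k)|^2$ and tends to $0$ pointwise. The set $\{S(t-\tau)f(\tau)\}_{\tau\in[0,t]}$ is compact in $Y$, as the continuous image of $[0,t]$, so its tails in $k$ are uniformly small. Combined with the uniform bound $\|(S(h)-I)\varphi/h\|_{\tilde Y}\le C\|\varphi\|_Y$, the dominated convergence theorem in $\tau$ then finishes the argument.
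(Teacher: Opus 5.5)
Your proposal is correct and follows essentially the same route as the paper. For the direct implication you differentiate $\tau\mapsto S(t-\tau)\Psi(\tau)$ in $\tilde Y$; you prove by hand the product rule that the paper cites from Pazy, and you rightly note that this needs $S$ to be bounded and strongly continuous on $\tilde Y$. For the converse you use the same splitting of the difference quotient of $\int_0^t S(t-\tau)B(\Psi)(\tau)\,d\tau$, with dominated convergence and $A\in\mathcal B(Y,\tilde Y)$ commuting with the integral. The compactness argument in your last paragraph is more than you need: pointwise convergence together with the uniform bound $\|(S(h)-I)\varphi/h\|_{\tilde Y}\le C\|\varphi\|_Y$ already suffices for dominated convergence.
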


\begin{proof} Suppose $\Psi \in C\left([0, T] ; Y\right)$ solves \eqref{sistema nuevo}. By Theorem \ref{S(t)Psi_0 es la solución del sistema nuevo lineal} and the product rule for the semigroup (see \cite{pazy1983semigroups}),
$$
\partial_{\tau}\left(S\left(t-\tau\right) \Psi\left(\tau\right)\right)=S\left(t-\tau\right)\left(\partial_{\tau}\Psi\left(\tau\right)-A \Psi\left(\tau\right)\right) = S\left(t-\tau\right) B\Psi\left(\tau\right)
$$
in $\tilde Y$. Integrating in $\tau$ from $0$ to $t$ gives
\begin{equation}
    \Psi(t)=S(t)\Psi_0+\int_0^tS\left(t-\tau\right) B\Psi\left(\tau\right)d\tau,
\end{equation}
so $\Psi$ satisfies the integral equation.
\\

Conversely, suppose $\Psi\in C\left([0, T] ; Y\right)$ satisfies \eqref{ecuación integral}. Fix $t\in[0,T)$ and $h>0$ with $t+h<T$. Write
\begin{equation}\label{int dt+}
    \begin{aligned}
& \frac{1}{h} \int_0^{t+h} S\left(t+h-\tau\right) B\Psi\left(\tau\right) d \tau-\frac{1}{h} \int_0^t S\left(t-\tau\right) B\Psi\left(\tau\right) d \tau \\
& \quad=\frac{1}{h} \int_0^t\left(S\left(t+h-\tau\right)-S\left(t-\tau\right)\right) B\Psi\left(\tau\right) d \tau+\frac{1}{h} \int_t^{t+h} S\left(t+h-\tau\right) B\Psi\left(\tau\right) d \tau.
\end{aligned}
\end{equation}
As in the proof of Theorem \ref{S(t)Psi_0 es la solución del sistema nuevo lineal},
\begin{equation}
\lim _{h \rightarrow 0^{+}} \frac{1}{h} \left(S\left(t+h-\tau\right)-S\left(t-\tau\right)\right) B\Psi\left(\tau\right) =A S\left(t-\tau\right) B\Psi\left(\tau\right), 
\end{equation}

where the limit is uniform in $h$ and is taken in $\tilde{Y}$. Then, by the Dominated Convergence Theorem,
\begin{equation}
 \lim _{h \rightarrow 0^{+}} \frac{1}{h} \int_0^t\left(S\left(t+h-\tau\right)-S\left(t-\tau\right)\right) B\Psi\left(\tau\right) d \tau=\int_0^t A S\left(t-\tau\right) B\Psi\left(\tau\right) d \tau.
\end{equation}
Since $A:Y\subseteq \tilde{Y}\longrightarrow \tilde{Y}$ is a closed operator (its graph is closed, as the graph of any continuous map), the integral of a closed operator applied to a continuous function commutes with the operator, so
\begin{equation}\label{int dt+1}
    \int_0^t A S\left(t-\tau\right) B\Psi\left(\tau\right) d \tau=A \int_0^t S\left(t-\tau\right) B\Psi\left(\tau\right) d \tau.
\end{equation}
For the second term in \eqref{int dt+}, since $\tau\mapsto S(t+h-\tau)B\Psi(\tau)$ is continuous on $[t,t+h]$, the Mean Value Theorem for Riemann-Stieltjes integrals gives some $c\in(t,t+h)$ with
\begin{equation}
    \int_t^{t+h} S\left(t+h-\tau\right) B\Psi\left(\tau\right) d \tau=hS\left(t+h-c\right) B\Psi\left(c\right),
\end{equation}
so, letting $h\to0^+$,
\begin{equation}\label{int dt+2}
    \lim _{h \rightarrow 0^{+}} \frac{1}{h}\int_t^{t+h} S\left(t+h-\tau\right) B\Psi\left(\tau\right) d \tau=B\Psi\left(t\right).
\end{equation}
Combining \eqref{int dt+}, \eqref{int dt+1} and \eqref{int dt+2},
\begin{equation}\label{int dt+ resultado}
 \lim _{h \rightarrow 0^{+}}\frac{1}{h}\paren{\int_0^{t+h} S\left(t+h-\tau\right) B\Psi\left(\tau\right) d \tau-\int_0^t S\left(t-\tau\right) B\Psi\left(\tau\right) d \tau}
=A \int_0^t S\left(t-\tau\right) B\Psi\left(\tau\right) d \tau+B\Psi\left(t\right).
\end{equation}

Now take $t\in(0,T]$ and $h<0$ with $0<t+h$. Write
\begin{equation}\label{int dt-}
\begin{aligned}
    & \frac{1}{h} \int_0^{t+h} S\left(t+h-\tau\right) B\Psi\left(\tau\right) d \tau-\frac{1}{h} \int_0^t S\left(t-\tau\right) B\Psi\left(\tau\right) d \tau \\
& \quad=\frac{1}{h} \int_0^{t+h}\left(S\left(t+h-\tau\right)-S\left(t-\tau\right)\right) B\Psi\left(\tau\right) d \tau-\frac{1}{h} \int_{t+h}^t S\left(t-\tau\right) B\Psi\left(\tau\right) d \tau.
\end{aligned}
\end{equation}
The same Mean Value Theorem argument as above gives
\begin{equation}\label{int dt-2}
    \lim _{h \rightarrow 0^{-}} -\frac{1}{h}\int_{t+h}^t S\left(t-\tau\right) B\Psi\left(\tau\right) d \tau=B\Psi\left(t\right).
\end{equation}
For the other term in \eqref{int dt-}: set $\varepsilon'>0$, by the uniform convergence in Theorem \ref{S(t)Psi_0 es la solución del sistema nuevo lineal} there is $\delta_1>0$ such that $|h|<\delta_1$ implies
\begin{equation}
    \norm{\frac{S\left(t+h-\tau\right)-S\left(t-\tau\right)}{h}B\Psi\left(\tau\right)-A S\left(t-\tau\right) B\Psi\left(\tau\right)}_{\tilde{Y}}<\frac{\varepsilon'}{2T}, \quad \forall \tau \in [0,t+h].
\end{equation}
Also, since $B\Psi\in C([0,T];Y)$ by \eqref{norma de B Psi}, the map $\tau\mapsto AS(t-\tau)B\Psi(\tau)$ is continuous with values in $\tilde Y$, then, by the Mean Value Theorem for Riemann-Stieltjes integrals there is $c\in[t+h,t]$ with 
$$\int_{t+h}^tAS(t-\tau)B\Psi(\tau)d\tau=(-h)AS(t-c)B\Psi(c),$$
so
\begin{equation}
     \norm{\int_{t+h}^tA S\left(t-\tau\right) B\Psi\left(\tau\right) d\tau}_{\tilde{Y}} \leq 2|h|C_{\epsilon,\delta,M}\sup_{\tau\in[t+h,t]}\norm{B\Psi(\tau)}_{Y}.
\end{equation}
Choosing $|h|<\delta_2:=\min\left\{\delta_1,\frac{\varepsilon'}{4C_{\epsilon,\delta,M}\sup_{\tau\in[t+h,t]}\norm{B\Psi(\tau)}_{Y}}\right\}$, 
\begin{equation}
    \begin{split}
    &\norm{\int_0^{t+h}\frac{S\left(t+h-\tau\right)-S\left(t-\tau\right)}{h}B\Psi\left(\tau\right) d\tau-\int_0^tA S\left(t-\tau\right) B\Psi\left(\tau\right) d\tau}_{\tilde{Y}}\\
    &\quad\leq\norm{\int_0^{t+h}\frac{S\left(t+h-\tau\right)-S\left(t-\tau\right)}{h}B\Psi\left(\tau\right) d\tau-\int_0^{t+h}A S\left(t-\tau\right) B\Psi\left(\tau\right) d\tau}_{\tilde{Y}}+\norm{\int_{t+h}^tA S\left(t-\tau\right) B\Psi\left(\tau\right) d\tau}_{\tilde{Y}}\\
    & \quad \leq \int_0^{t+h}\frac{\varepsilon'}{2T} d\tau+2|h|C_{\epsilon,\delta,M}\sup_{\tau\in[t+h,t]}\norm{B\Psi(\tau)}_{Y}\\
    &\quad \leq \frac{\varepsilon'}{2T}(t+h)+\frac{\varepsilon'}{2}<\varepsilon'.
    \end{split}
\end{equation}
so, since $\varepsilon'$ was arbitrary,
\begin{equation}\label{int dt-1}
\lim _{h \rightarrow 0^{-}} \frac{1}{h} \int_0^{t+h}\left(S\left(t+h-\tau\right)-S\left(t-\tau\right)\right) B\Psi\left(\tau\right) d \tau = \int_0^t A S\left(t-\tau\right) B\Psi\left(\tau\right) d \tau =A \int_0^t S\left(t-\tau\right) B\Psi\left(\tau\right) d \tau.
\end{equation}
Combining \eqref{int dt-}, \eqref{int dt-2} and \eqref{int dt-1},
\begin{equation}\label{int dt- resultado}
 \lim _{h \rightarrow 0^{-}}\frac{1}{h}\paren{\int_0^{t+h} S\left(t+h-\tau\right) B\Psi\left(\tau\right) d \tau-\int_0^t S\left(t-\tau\right) B\Psi\left(\tau\right) d \tau}
=A \int_0^t S\left(t-\tau\right) B\Psi\left(\tau\right) d \tau+B\Psi\left(t\right).
\end{equation}
By \eqref{int dt+ resultado} and \eqref{int dt- resultado}, in $\tilde{Y}$,
\begin{equation}\label{int dt resultado}
    \frac{d}{d t} \int_0^t S\left(t-\tau\right) B\Psi\left(\tau\right) d \tau =A \int_0^t S\left(t-\tau\right) B\Psi\left(\tau\right) d \tau+B\Psi(t)=A(\Psi(t)-S(t)\Psi_0)+B\Psi(t).
\end{equation}

Differentiating \eqref{ecuación integral} in $t$ and using \eqref{int dt resultado},
\begin{equation}
    \partial_t\Psi(t)=AS(t)\Psi_0+A(\Psi(t)-S(t)\Psi_0)+B\Psi(t)=A\Psi(t)+B\Psi(t),
\end{equation}
which is continuous in $t$, with $\Psi(0)=\Psi_0$; that is, $\Psi$ solves \eqref{sistema nuevo} and $\Psi \in C^1\left([0, T] ;\tilde{Y} \right)$.
\end{proof}
\medskip

\begin{teorema}\label{BPL sist nuevo}
    Let $s>\frac{1}{2}$. The IVP \eqref{sistema nuevo} is locally well-posed in $Y=\left(H^s(\mathbb{T})\right)^3$: for every $\Psi_0 \in Y$ there exist $T>0$ and a unique $$\Psi \in C\left([0, T] ; Y\right) \cap C^1\left([0, T] ; \tilde{Y}\right)$$ satisfying \eqref{sistema nuevo}. Moreover, the data-to-solution map $\Psi_0 \in Y \rightarrow \Psi \in C\left([0, T] ; Y\right)$ is continuous in the following sense: if $\Psi_{0,n} \in Y$ are such that $\Psi_{0,n} \xrightarrow{Y} \Psi_{0}$ as $n \rightarrow \infty$, and $\Psi_n \in C\left(\left[0, T_n\right] ; Y\right)$ are the solutions constructed in Theorem \ref{Phi es contracción} with $\Psi_n(0)=\Psi_{0,n}$, then for every $T^{\prime} \in\left(0, T\right)$ there is $N\in \mathbb{N}$ such that $n>N$ implies $\Psi_n\in C\left([0, T^{\prime}] ; Y\right)$ and
$$
\lim _{n \rightarrow \infty} \sup _{t \in[0, T^{\prime}]}\left\|\Psi_n(t)-\Psi(t)\right\|_Y=0.
$$
\end{teorema}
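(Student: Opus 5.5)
Existence follows at once from the two preceding results. Theorem \ref{Phi es contracción} gives $T=T(s,\|\Psi_0\|_Y)>0$ and a fixed point $\Psi\in X(T,K)\subset C([0,T];Y)$ of $\Phi$. Theorem \ref{PVI eq ec int} then shows that $\Psi$ solves \eqref{sistema nuevo} and that $\Psi\in C^1([0,T];\tilde Y)$.

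Uniqueness needs more care, because the fixed point is unique only inside the ball $X(T,K)$. Let $\Psi,\tilde\Psi\in C([0,T];Y)$ be two solutions with the same data. By Theorem \ref{PVI eq ec int} both satisfy \eqref{ecuación integral}. Set $R:=\max\{\sup_{[0,T]}\|\Psi\|_Y,\sup_{[0,T]}\|\tilde\Psi\|_Y\}$. The Lipschitz estimate \eqref{norma de BPsi1-BPsi2} holds with $K$ replaced by $R$, so with $L_R:=\epsilon C_sR(3C_{\sigma_2,W,D,M}C_sR+C_{W,D,M})$ and $\|S(t)\|_{\mathcal B(Y)}\le 2$ we get
$$\|\Psi(t)-\tilde\Psi(t)\|_Y\le 2L_R\int_0^t\|\Psi(\tau)-\tilde\Psi(\tau)\|_Y\,d\tau .$$
Gronwall's inequality then forces $\Psi\equiv\tilde\Psi$ on $[0,T]$. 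If one prefers to avoid Gronwall, the same inequality on a short interval of length $<1/(2L_R)$ gives coincidence there, and the argument can be iterated.

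For continuous dependence, take $\Psi_{0,n}\to\Psi_0$ in $Y$. Then $\|\Psi_{0,n}\|_Y\to\|\Psi_0\|_Y$, and since $T(s,\cdot)$ from \eqref{escoger T} and $T'$ depend continuously and decreasingly on $K=4\|\Psi_0\|_Y$, the times $T_n$ satisfy $\liminf T_n\ge T$ when $T$ is taken as the explicit value of the previous proof. So for a given $T'<T$ there is $N$ with $T_n>T'$ for $n>N$. The main point is to place all solutions in a common ball. I would use $K^*:=4(\|\Psi_0\|_Y+1)$ and redo Steps 2 and 3 of Theorem \ref{Phi es contracción} with $K^*$; the required time bounds are continuous in $K$, and by uniqueness the solutions agree with the $\Psi_n$ on their common interval. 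This gives $\sup_{[0,T']}\|\Psi_n\|_Y,\ \sup_{[0,T']}\|\Psi\|_Y\le K^*$, after possibly shrinking to a $T'$-compatible time and iterating a finite number of times. Subtracting the integral equations then yields
$$\|\Psi_n(t)-\Psi(t)\|_Y\le 2\|\Psi_{0,n}-\Psi_0\|_Y+2L_{K^*}\int_0^t\|\Psi_n-\Psi\|_Y\,d\tau,$$
and Gronwall gives $\sup_{[0,T']}\|\Psi_n-\Psi\|_Y\le 2e^{2L_{K^*}T'}\|\Psi_{0,n}-\Psi_0\|_Y\to0$.

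The main obstacle is this uniform a priori bound on $[0,T']$ for $n$ large. In the first step I would make it rigorous by a continuity (bootstrap) argument. Let $t_n^*$ be the supremum of the times where $\|\Psi_n(t)-\Psi(t)\|_Y\le 1$. On $[0,t_n^*]$ the norms are bounded by $\sup_{[0,T']}\|\Psi\|_Y+1$, so the Gronwall bound above holds there. For $n$ large that bound makes the difference strictly less than $1$, which forces $t_n^*=T'$.
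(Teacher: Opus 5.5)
Your proposal is correct and follows the paper's proof. Existence comes from Theorems \ref{Phi es contracción} and \ref{PVI eq ec int}. Unconditional uniqueness comes from Gronwall applied to the Duhamel formula, with the Lipschitz bound \eqref{norma de BPsi1-BPsi2} taken at the radius $\max_i\sup_{[0,T]}\|\Psi_i(t)\|_Y$. Continuous dependence comes from the continuity of the existence time in $K$ together with the same Gronwall estimate.

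The only place you work harder than needed is the uniform bound on $[0,T']$. The $\Psi_n$ are the fixed points of Theorem \ref{Phi es contracción}, so they lie in $X(T_n,K_n)$ with $K_n=4\|\Psi_{0,n}\|_Y\to K$, and $\sup_{[0,T']}\|\Psi_n(t)\|_Y\le K_n$ is automatic; this is what the paper uses. Your bootstrap is valid and has the advantage of applying to arbitrary solutions on $[0,T']$. The intermediate $K^*$-ball idea, however, would by itself only give the shorter time $T(K^*)<T$, and the proposed iteration is not justified, so it is best dropped.
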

 
\begin{proof}
\textbf{\textit{Existence:}} Take $K=4\norm{\Psi_0}_Y$. By Theorem \ref{Phi es contracción}, there exist $T>0$ and a unique $\Psi=(\psi,U,V) \in X(T,K)$ satisfying \eqref{ecuación integral}, so $\Psi \in C\left([0, T] ; Y\right)$. By Theorem \ref{PVI eq ec int}, $\Psi \in C^1\left([0, T] ; \tilde{Y}\right)$ and solves \eqref{sistema nuevo} with $\Psi(0)=\Psi_0$.
\\

\textbf{\textit{Uniqueness:}} Suppose $\Psi_1=(\psi_1,U_1,V_1), \Psi_2=(\psi_2,U_2,V_2)\in C\left([0, T] ; Y\right)$ solve \eqref{sistema nuevo} with data $\Psi_{0,1},\Psi_{0,2}$. By Theorem \ref{PVI eq ec int},
\begin{equation}
    \Psi_1(t)=S(t)\Psi_{0,1}+\int_0^tS(t-\tau)B(\Psi_1)(\tau)d \tau, \quad \Psi_2(t)=S(t)\Psi_{0,2}+\int_0^tS(t-\tau)B(\Psi_2)(\tau)d \tau.
\end{equation}
Let $\mathcal{K}^{*}:=\max\{\kappa_1,\kappa_2,\kappa_3\}$, where
$$\kappa_1=\sup_{t \in[0, T]}\left(C_{\sigma_2,W,D,M}C_s^2\left(\norm{\psi_2(t)}_Y^2+\norm{\psi_1(t)}_Y^2+\norm{\psi_2(t)}_Y\norm{\psi_1(t)}_Y\right)\right.$$
$$\left.+WC_s\norm{U_1(t)}_Y+W\frac{|D|}{M}C_s\norm{V_1(t)}_Y\right),$$
$$\kappa_2=\sup_{t \in[0, T]}\paren{WC_s\norm{\psi_2(t)}_Y},$$
$$\kappa_3=\sup_{t \in[0, T]}\paren{W\frac{|D|}{M}C_s\norm{\psi_2(t)}_Y}.$$ 
Then, using the semigroup bound and and what was seen in \eqref{norma de BPsi1-BPsi2},
\begin{equation}\label{norma Psi1-Psi2}
    \begin{aligned}
        \norm{\Psi_1(t)-\Psi_2(t)}_Y&\leq2\norm{\Psi_{0,1}-\Psi_{0,2}}_Y\\
        &\quad+\int_0^t2\epsilon\left(\kappa_1\norm{\psi_1(\tau)-\psi_2(\tau)}_s+\kappa_2\norm{U_1(\tau)-U_2(\tau)}_s+\kappa_3\norm{V_1(\tau)-V_2(\tau)}_s\right)d\tau\\
        &\leq 2\norm{\Psi_{0,1}-\Psi_{0,2}}_Y+2\epsilon \mathcal{K}^{*}\int_0^t\norm{\Psi_1(\tau)-\Psi_2(\tau)}_Yd\tau.
    \end{aligned}
\end{equation}
By Gronwall's inequality, $\norm{\Psi_1(t)-\Psi_2(t)}_Y\leq2\norm{\Psi_{0,1}-\Psi_{0,2}}_Ye^{2\epsilon \mathcal{K}^{*}t}$ for all $t\in[0,T]$, so $\Psi_{0,1}=\Psi_{0,2}$ gives $\Psi_1=\Psi_2$.
\\

\textbf{\textit{Continuous dependence:}} Let $\varepsilon_1>0$ and $T^{\prime}\in (0,T)$, and choose $\varepsilon_2>0$ with $T^{\prime}+\varepsilon_2<T$. With $K_n=4\norm{\Psi_{0,n}}$ and
\begin{equation}
    g(K_n)=\min\left\{\frac{1}{4\epsilon C_sK_n \paren{C_{\sigma_2, W, D, M}C_sK_n+C_{W, D, M}}}, \frac{1}{2\epsilon C_sK_n\paren{3C_{\sigma_2, W, D, M}C_sK_n+C_{W, D, M}}}\right\},
\end{equation}
the choice of $T$ in \eqref{escoger T} shows $T<g(K)$, and the existence time $T_n$ of each $\Psi_n$ may be chosen with $0<g(K_n)-T_n<\varepsilon_2/2$. Now, since $g$ is a continuous function, there exists $\delta>0$ such that if $|K_n-K_m|<\delta$, then $|g(K_n)-g(K_m)|<\frac{\varepsilon_2}{2}$. Moreover, $\Psi_{0,n} \xrightarrow{Y} \Psi_{0}$, as $n \rightarrow \infty$, so there exists $N_1\in \mathbb{N}$ such that 
$$|K_n-K|\leq 4\norm{\Psi_{0,n}-\Psi_{0}}_Y<\delta, \quad \forall n>N_1.$$
Thus, if $n>N_1$, then
\begin{equation}\label{Kn-k}
    |K_n-K|=\abs{4\norm{\Psi_{0,n}}_Y-4\norm{\Psi_{0}}_Y}\leq 4\norm{\Psi_{0,n}-\Psi_{0}}_Y<\delta,
\end{equation}
whence
$$T-T_n=(T-g(K))+(g(K)-g(K_n))+(g(K_n)-T_n)<0+\frac{\varepsilon_2}{2}+\frac{\varepsilon_2}{2}=\varepsilon_2<T-T^{\prime}.$$
So $T_n>T'$ for $n>N_1$, and, by the uniqueness estimate \eqref{norma Psi1-Psi2} applied with $\Psi_{0,1}=\Psi_{0,n}$, $\Psi_{0,2}=\Psi_0$,
\begin{equation}
    \sup _{t \in[0, T^{\prime}]}\left\|\Psi_n(t)-\Psi(t)\right\|_Y<2\left\|\Psi_{0,n}-\Psi_0\right\|_Ye^{2\epsilon K_n^{*}T^{\prime}},
\end{equation}
where $K_n^*$ is the analogue of $\mathcal K^*$ for the pair $\Psi_n,\Psi$. Since $\sup_{t\in[0,T']}\|\Psi_n(t)\|_Y\le K_n<K+\delta$ and $\sup_{t\in[0,T']}\|\Psi(t)\|_Y\le K$ (by construction in Theorem \ref{Phi es contracción}), $K_n^*$ is bounded above by a constant $K'$ independent of $n$. Taking $N_2\ge N_1$ so that $\|\Psi_{0,n}-\Psi_0\|_Y<\varepsilon_1e^{-2\epsilon K'T'}/2$ for $n>N_2$ gives $\sup_{t\in[0,T']}\|\Psi_n(t)-\Psi(t)\|_Y<\varepsilon_1$ for $n>N_2$. Since $\varepsilon_1>0$ was arbitrary, the claim follows.
\end{proof}

\subsection{Local Well-Posedness of the Modified Zakharov-Rubenchik System in $H^{s}(\mathbb{T})\times H^{s}(\mathbb{T})\times H^{s+1}(\mathbb{T})$, $s>3/2$}
We now prove local well-posedness of \eqref{ZR modificado}, with 
\begin{equation}\label{L_1 L_2}
    L_1=-i\delta\alpha_1\left( \partial_x^2\psi\overline{\psi}- \partial_x^2\overline{\psi}\psi\right), \quad L_2=-\frac{i}{M}\delta\alpha_2\left( \partial_x\psi\overline{\psi}- \partial_x\overline{\psi}\psi\right),
\end{equation}
and $\alpha_1,\alpha_2$ solving 
\begin{equation}\label{alpha1 alpha2}
    \left\{\begin{array}{l}
 \frac{1}{M}\alpha_2+D=-\sigma_3\alpha_1,\\
\frac{1}{M}\alpha_1+M=-\sigma_3\alpha_2.
\end{array}\right.
\end{equation}
Our considerations impose the regimes $M< 1$ and $M>1$.

\begin{teorema}
    Let $s>\frac{3}{2}$ and $M\neq 1$. The IVP \eqref{ZR modificado} is locally well-posed in $Z=H^{s}(\mathbb{T})\times H^{s}(\mathbb{T})\times H^{s+1}(\mathbb{T})$: for every $\tilde{\Psi}_0=(\psi_0, \rho_0, \phi_0) \in Z$ there exist $T>0$ and a unique
    $$\tilde{\Psi}=(\psi, \rho, \phi) \in C\left([0, T] ; Z\right) \cap C^1\left([0, T] ; \tilde{Z}=H^{s-2}(\mathbb{T})\times H^{s-2}(\mathbb{T})\times H^{s-1}(\mathbb{T})\right)$$
    satisfying \eqref{ZR modificado}. Moreover, the data-to-solution map is continuous in the same sense as in Theorem \ref{BPL sist nuevo}.
\end{teorema}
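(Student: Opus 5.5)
The plan is to transfer Theorem \ref{BPL sist nuevo} back to the original unknowns through the change of variables of Subsection \ref{deduccion de sistema auxiliar}. Given $\tilde\Psi_0=(\psi_0,\rho_0,\phi_0)\in Z$, set
\[
\Psi_0=\bigl(\psi_0,\ \rho_0-\alpha_1|\psi_0|^2,\ M\partial_x\phi_0-\alpha_2|\psi_0|^2\bigr).
\]
By \eqref{Psi0 in Y} and the algebra property of $H^s(\mathbb{T})$ (valid since $s>3/2>1/2$), $\Psi_0\in Y=(H^s(\mathbb{T}))^3$. Theorem \ref{BPL sist nuevo} then gives $T>0$ and a unique $\Psi=(\psi,U,V)\in C([0,T];Y)\cap C^1([0,T];\tilde Y)$ solving \eqref{sistema nuevo}. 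We define $\rho=U+\alpha_1|\psi|^2$ and recover $\phi$ from the third equation of \eqref{ZR modificado}:
\[
\phi(t)=\phi_0-\int_0^t\Bigl(\tfrac1{M^2}\rho+|\psi|^2+\epsilon L_2\Bigr)(\tau)\,d\tau .
\]
Differentiating the third equation in $x$ is not invertible on the zero mode, so $\phi$ cannot be obtained directly from $Q=V+\alpha_2|\psi|^2$. Instead, one checks that $M\partial_x\phi$ and $Q$ satisfy the same equation with the same data, which gives $M\partial_x\phi=V+\alpha_2|\psi|^2$.

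The regularity bookkeeping goes as follows. Since $\psi\in C([0,T];H^s)$ with $s>3/2$, we have $\partial_x\psi\in H^{s-1}$ with $s-1>1/2$. Hence $L_2\in C([0,T];H^{s-1})$, and $\rho\in C([0,T];H^s)$. The $H^{s+1}$ regularity of $\phi$ comes from the identity $M\partial_x\phi=V+\alpha_2|\psi|^2\in H^s$ together with control of the zero mode, which is given by the integral formula. This is precisely where $s>3/2$ is needed, rather than merely $s>1/2$.

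To verify \eqref{ZR modificado}, reverse the computation of Subsection \ref{deduccion de sistema auxiliar}. Identity \eqref{igualdad para simplificar} holds in $H^{s-2}$ because $\psi\in C^1([0,T];H^{s-2})$ and $\partial_x^2\psi\,\overline\psi$ makes sense there. It turns the $U,V$ equations into \eqref{4.17}, using the choice \eqref{alpha1 alpha2} of $\alpha_1,\alpha_2$, which is possible since $M\neq1$. The first equation of \eqref{ZR modificado} is then the first component of \eqref{sistema nuevo} after substituting $\rho$ and $D\partial_x\phi=\frac DM(V+\alpha_2|\psi|^2)$.

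For the time regularity, $\partial_t\rho=\partial_tU+\alpha_1\partial_t|\psi|^2\in C([0,T];H^{s-2})$. Also $\partial_t\phi\in C([0,T];H^{s-1})$, because $\partial_t\phi=-\tfrac1{M^2}\rho-|\psi|^2-\epsilon L_2$ and each term lies in $H^{s-1}$. This gives $\tilde\Psi\in C^1([0,T];\tilde Z)$.

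Uniqueness goes by the forward map. Any solution $\tilde\Psi\in C([0,T];Z)\cap C^1([0,T];\tilde Z)$ produces $(\psi,U,V)\in C([0,T];Y)$ solving \eqref{sistema nuevo}, by the derivation itself, which is legitimate at this regularity by the same product estimates. The uniqueness part of Theorem \ref{BPL sist nuevo} then identifies $\psi,U,V$. The integral formula identifies $\phi$, since $\rho$ and $\psi$ are already determined. Continuous dependence follows by composing three continuous maps: the map $\tilde\Psi_0\mapsto\Psi_0$, continuous $Z\to Y$ by the algebra estimate; the data-to-solution map of Theorem \ref{BPL sist nuevo}; and the inverse reconstruction $(\psi,U,V)\mapsto(\psi,\rho,\phi)$, which is locally Lipschitz from $C([0,T'];Y)$ to $C([0,T'];Z)$ by the same estimates.

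The main obstacle is the reconstruction of $\phi$ in $H^{s+1}$: controlling the zero Fourier mode through the time integral, and justifying that the derived identities hold in the weak spaces $H^{s-2}$, $H^{s-1}$. Here the threshold $s>3/2$ enters via the multiplicative estimate in $H^{s-1}$.
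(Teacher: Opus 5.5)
Your proposal is correct and follows the paper's own route: transform the data, apply Theorem \ref{BPL sist nuevo}, set $\rho=U+\alpha_1|\psi|^2$, define $\phi$ by time integration of the third equation, and identify $M\partial_x\phi=V+\alpha_2|\psi|^2$ through the trivial linear IVP, with $s>3/2$ entering through the algebra property of $H^{s-1}(\mathbb{T})$. You are more careful than the paper in two places: the paper only asserts uniqueness, and it obtains $\phi_n\to\phi$ in $C([0,T'];H^{s+1}(\mathbb{T}))$ by integrating a convergence that holds only in $H^{s-1}(\mathbb{T})$, whereas your forward-map uniqueness argument and your use of $M\partial_x\phi_n=V_n+\alpha_2|\psi_n|^2$ together with the zero mode close both points.
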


\begin{proof} 
\textbf{\textit{Existence and uniqueness:}} Let $\tilde{\Psi}_0=(\psi_0, \rho_0, \phi_0) \in Z$ and $\Psi_0=(\psi_0, \rho_0-\alpha_1 |\psi_0|^2, M\partial_x\phi_0-\alpha_2 |\psi_0|^2)$. By \eqref{Psi0 in Y}, $\Psi_0 \in Y$, so by Theorem \ref{BPL sist nuevo} there exist $T>0$ and a unique $$\Psi=(\psi, U, V) \in C\left([0, T] ; Y\right) \cap C^1\left([0, T] ; \tilde{Y}\right)$$ satisfying \eqref{sistema nuevo}. Setting $\rho=U+\alpha_1|\psi|^2$, $Q=V+\alpha_2|\psi|^2$, and reversing the computation of Section \ref{deduccion de sistema auxiliar}, we get (using \eqref{igualdad para simplificar}, the formulas for $L_1,L_2$, and the equations satisfied by $\alpha_1,\alpha_2$):
\begin{equation}
    \partial_t\psi-\sigma_3 \partial_x\psi-i\epsilon \delta \partial_x^2\psi+i\epsilon\left\{\sigma_2|\psi|^2+W\left(\rho+\frac{D}{M} Q\right)\right\} \psi=0,
\end{equation}
\begin{equation}
    \begin{split}
        \partial_t\rho &=\partial_t U+\alpha_1\partial_t\paren{\abs{\psi}^2}\\
        &=-\frac{1}{M}\partial_xV+\alpha_1\partial_t\paren{\abs{\psi}^2}\\
         &=-\frac{1}{M}\partial_xQ+\frac{1}{M}\alpha_2\partial_x\paren{\abs{\psi}^2}+D\partial_x\paren{\abs{\psi}^2}-D\partial_x\paren{\abs{\psi}^2}+\alpha_1\partial_t\paren{\abs{\psi}^2}\\
        &=-\frac{1}{M}\partial_xQ-\sigma_3\alpha_1\partial_x\paren{\abs{\psi}^2}+\alpha_1\partial_t\paren{\abs{\psi}^2}-D\partial_x\paren{\abs{\psi}^2}\\
        &=-\frac{1}{M}\partial_xQ+i\epsilon\delta\alpha_1\left( \partial_x^2\psi\overline{\psi}- \partial_x^2\overline{\psi}\psi\right)-D\partial_x\paren{\abs{\psi}^2}\\
        &=-\frac{1}{M}\partial_xQ-D\partial_x\paren{\abs{\psi}^2}-\epsilon L_1,\\
    \end{split}
\end{equation}
and, by the same computation applied to $V$,
\begin{equation}
    \partial_tQ=-\frac{1}{M}\partial_x\rho-M\partial_x\paren{\abs{\psi}^2}-\epsilon M \partial_x L_2.
\end{equation}
So $\psi, \rho, Q$ solve
\begin{equation}\label{ec psi rho y Q}
    \left\{\begin{array}{l}
    \partial_t\psi-\sigma_3 \partial_x\psi-i\epsilon \delta \partial_x^2\psi+i\epsilon\left\{\sigma_2|\psi|^2+W\left(\rho+\frac{D}{M} Q\right)\right\} \psi=0,\\
\partial_t\rho+\frac{1}{M}\partial_xQ+D\partial_x\left(|\psi|^2\right)+\epsilon L_1=0, \\
\partial_tQ+\frac{1}{M} \partial_x\rho+M\partial_x\left(|\psi|^2\right)+\epsilon M \partial_xL_2=0,\\
\psi(t=0)=\psi_0, \quad  \rho(t=0)=\rho_0, \quad Q(t=0)=M\partial_x \phi_0.
\end{array}\right.
\end{equation}

Now, $f\in H^{s}(\mathbb{T})$ if and only if $\partial_x f\in H^{s-1}(\mathbb{T})$, so $\partial_x \psi \in H^{s-1}(\mathbb{T})$, and $\psi, \rho \in H^{s}(\mathbb{T})\subseteq H^{s-1}(\mathbb{T})$. Since $s-1>\frac{1}{2}$, $H^{s-1}(\mathbb{T})$ is a Banach algebra with constant $C_{s-1}$, so
\begin{equation}
\begin{split}
    \norm{\frac{1}{M^2} \rho(t)+|\psi(t)|^2+\epsilon L_2(t)}_{s-1}&\leq \frac{1}{M^2}\norm{ \rho(t)}_{s-1}+C_{s-1}\norm{\psi(t)}^2_{s-1}\\
    &\quad+\frac{\epsilon}{M}\abs{\delta \alpha_2}C_{s-1}\paren{\norm{\partial_x\psi(t)}_{s-1}\norm{\overline{\psi}(t)}_{s-1}+\norm{\partial_x\overline{\psi}(t)}_{s-1}\norm{\psi(t)}_{s-1}}\\
    &<\infty, \quad\forall t\in [0,T].
\end{split}
\end{equation}
Moreover, since $\psi \in C\paren{[0,T];H^{s}(\mathbb{T})}$,
\begin{equation}\label{continuidad de partial psi}
    \norm{\partial_x\psi(t+h)-\partial_x\psi(t)}_{s-1}\leq \norm{\psi(t+h)-\psi(t)}_{s},
\end{equation}
then $\partial_x\psi \in C\paren{[0,T];H^{s-1}(\mathbb{T})}$, and hence $\frac{1}{M^2} \rho+|\psi|^2+\epsilon L_2 \in C\paren{[0,T];H^{s-1}(\mathbb{T})}$. Thus the linear IVP
\begin{equation}
  \left\{\begin{array}{l}
    \partial_t\phi+\frac{1}{M^2} \rho+|\psi|^2+\epsilon L_2=0,\\
    \phi(t=0)=\phi_0 \in H^{s+1}(\mathbb{T}),
    \end{array}\right. 
\end{equation}
has the unique solution
\begin{equation}
\phi(t)=-\int_0^t\paren{\frac{1}{M^2} \rho(\tau)+|\psi(\tau)|^2+\epsilon L_2(\tau)} d\tau + \phi_0.
\end{equation}
Consequently $M\partial_x\phi$ satisfies
\begin{equation}\label{ec Mpartial_xphi}
    \left\{\begin{array}{l}
\partial_tM\partial_x\phi+\frac{1}{M} \partial_x\rho+M\partial_x\left(|\psi|^2\right)+\epsilon M \partial_xL_2=0,\\
M\partial_x\phi(t=0)=M\partial_x \phi_0,
\end{array}\right.
\end{equation}
which is the same linear IVP satisfied by $Q$ in \eqref{ec psi rho y Q}. By uniqueness of that IVP, $Q=M\partial_x\phi$, and by the equivalence $f\in H^s\Leftrightarrow\partial_xf\in H^{s-1}$ again, $\phi \in H^{s+1}(\mathbb{T})$. Substituting $Q=M\partial_x\phi$ back into the first two equations of \eqref{ec psi rho y Q} shows that 
\begin{equation}
    \left\{\begin{array}{l}
       \partial_t\psi-\sigma_3 \partial_x\psi-i\epsilon \delta \partial_x^2\psi+i\epsilon\left\{\sigma_2|\psi|^2+W\left(\rho+D \partial_x \phi\right)\right\} \psi=0,\\
       \partial_t\rho+\partial_x^2\phi+D\partial_x\left(|\psi|^2\right)+\epsilon L_1=0,\\
       \partial_t\phi+\frac{1}{M^2} \rho+|\psi|^2+\epsilon L_2=0,\\
       \psi(t=0)=\psi_0, \quad  \rho(t=0)=\rho_0, \quad \phi(t=0)=\phi_0.
    \end{array}\right.
\end{equation}
That is, $\tilde{\Psi}=(\psi, \rho, \phi) \in C\left([0, T] ; Z\right)$ uniquely solves \eqref{ZR modificado}. Moreover, $L_1 \in H^{s-2}(\mathbb{T})$, since
\begin{equation}
    \norm{L_1}_{s-2}=\norm{-i\delta\alpha_1\partial_x\left( \partial_x\psi\overline{\psi}- \partial_x\overline{\psi}\psi\right)}_{s-2}\leq \abs{\delta\alpha_1}\norm{\partial_x\psi\overline{\psi}- \partial_x\overline{\psi}\psi}_{s-1}<\infty.
\end{equation}
Applying the same two facts (the derivative equivalence and the Banach algebra property, now to each nonlinear term appearing in $\partial_t\tilde\Psi$) shows that $\partial_t \tilde{\Psi}(t)\in \tilde{Z}$ for every $t\in [0,T]$, and, using estimates similar to \eqref{continuidad de partial psi} for each factor, that each component of $\partial_t \tilde{\Psi}$ is a sum of products of functions continuous in $t\in [0,T]$; hence
\begin{equation}
    \tilde{\Psi}=(\psi, \rho, \phi) \in C\left([0, T] ; Z\right) \cap C^1\left([0, T] ; \tilde{Z}\right).
\end{equation}

\textbf{\textit{Continuous dependence:}} Let $T'\in (0,T)$ and $\tilde{\Psi}_{0,n}=(\psi_{0,n}, \rho_{0,n}, \phi_{0,n})\to \tilde{\Psi}_{0}=(\psi_0, \rho_0, \phi_0)$ in $Z$. For each $n$, let $\Psi_{0,n}=(\psi_{0,n}, \rho_{0,n}-\alpha_1 |\psi_{0,n}|^2, M\partial_x\phi_{0,n}-\alpha_2 |\psi_{0,n}|^2)\in Y$ and $\Psi_n=(\psi_n, U_n, V_n)\in C\left([0, T_n] ; Y\right)$ the corresponding solution of \eqref{sistema nuevo} from Theorem \ref{Phi es contracción}. Since
\begin{equation}
    \norm{\partial_x\phi_{0,n}-\partial_x\phi_0}_{s}\leq \norm{\phi_{0,n}-\phi_0}_{s+1}, \qquad \norm{\abs{\psi_{0,n}}^2-\abs{\psi_{0}}^2}_s\leq \paren{\norm{\psi_{0,n}}_s+\norm{\psi_{0}}_s}\norm{\psi_{0,n}-\psi_{0}}_s,
\end{equation}
we get $\Psi_{0,n}\to\Psi_{0}$ in $Y$. Let $\tilde{\Psi}_{n}\in C\left([0, T_n] ; Z\right)$ be the solution of \eqref{ZR modificado} with initial data $\tilde{\Psi}_{0,n}$, constructed as above. By Theorem \ref{BPL sist nuevo}, there is $N\in \mathbb{N}$ such that $n>N$ implies $\Psi_n\in C\left([0, T^{\prime}] ; Y\right)$ and $\sup_{t\in[0,T']}\|\Psi_n(t)-\Psi(t)\|_Y\to0$; since $\rho_n=U_n+\alpha_1 |\psi_n|^2$ and
$$\partial_t \phi_n(t)=-\paren{\frac{1}{M^2} \rho_n(t)+|\psi_n(t)|^2-\epsilon\frac{i}{M}\delta\alpha_2\left( \partial_x\psi_n(t)\overline{\psi_n}(t)- \partial_x\overline{\psi_n}(t)\psi_n(t)\right)},$$
we get, for $n>N$,
\begin{equation}\label{rho_n to rho}
     \psi_n\xrightarrow[n \to \infty]{C([0, T^{\prime}];H^{s}(\mathbb{T}))}\psi, \quad\rho_n\xrightarrow[n \to \infty]{C([0, T^{\prime}];H^{s}(\mathbb{T}))}\rho, \quad \partial_t \phi_n\xrightarrow[n \to \infty]{C([0, T^{\prime}];H^{s-1}(\mathbb{T}))}\partial_t \phi.
\end{equation}
Since this last convergence is uniform in $t\in[0,T']$, integrating in $t$ and using $\phi_{0,n}\to\phi_0$ gives
\begin{equation}
    \lim_{n\to \infty}\phi_n(t)=\int_0^t\partial_t \phi(\tau) d \tau +\phi_0=\phi(t)
\end{equation}
for every $t\in [0,T']$, uniformly, that is,
\begin{equation}\label{phi_n to phi}
    \phi_n\xrightarrow[n \to \infty]{C([0, T^{\prime}];H^{s+1}(\mathbb{T}))} \phi.
\end{equation}
From \eqref{rho_n to rho} and \eqref{phi_n to phi}, $\sup_{t\in[0,T']}\|\tilde\Psi_n(t)-\tilde\Psi(t)\|_Z\to0$.
\end{proof}
\medskip
\begin{remark}
    By the choice of $T$ in \eqref{escoger T}, the existence time in the local well-posedness of \eqref{ZR modificado} is of order $O(\epsilon^{-1})$. 
\end{remark}

\newpage
\section{Open Problems}\label{problemas abiertos}

\begin{enumerate}
    \item Global existence of solutions of the modified Zakharov-Rubenchik system (periodic case), or finite-time blow-up.

    \item In two or three dimensions, existence of local/global-in-time solutions on the correct time scale $O(\epsilon^{-1})$, fully justifying the Benney-Roskes system as a water wave model.

    \item Extending the Schochet-Weinstein method (used in \cite{SchochetWeinstein1986}, \cite{obrecht2015approximation}, \cite{luong2018cauchy}), which does not rely on any dispersive property of the Schrödinger group, to the Cauchy problem on $\mathbb{T}^d$ or $\mathbb{R}^{d-1} \times \mathbb{T}$, $d=2,3$, under weaker regularity assumptions on the data.
\end{enumerate}

\bibliographystyle{plain}  
\cleardoublepage
\phantomsection
\addcontentsline{toc}{section}{Bibliography}

\bibliography{bibliografia}

\end{document}